\documentclass[11pt]{amsart}
\usepackage[active]{srcltx}
\usepackage{a4wide}
\usepackage{amsthm,amsfonts,amsmath,mathrsfs,amssymb}
\usepackage{dsfont}
\usepackage[T1]{fontenc}
\usepackage[utf8]{inputenc}
\usepackage{fouriernc}

\usepackage{hyperref}

\usepackage[shortlabels]{enumitem}
\usepackage[dvipsnames,usenames]{color}

\usepackage{comment}

\def\a{\alpha}		 \def\b{\beta}		  
		 	  
\def\la{\lambda}	 \def\om{\omega}	  \def\O{\Omega}
	 		  \def\t{\theta}
		 	  
\def\z{\zeta}		 \def\F{\Phi}		  

	 \def\C{{\mathbb C}}
\def\D{{\mathbb D}}	 
	 
	 \def\R{{\mathbb R}}
\def\T{{\mathbb T}}	 \def\Z{{\mathbb Z}}

	 \def\cb{{\mathcal B}}
\def\cc{{\mathcal C}}

	 \def\cp{{\mathcal P}}	
\def\rad{{\mathcal R}}

\def\({\left(}		 \def\){\right)}

\DeclareMathOperator{\re}{Re}

\newtheorem{thm}{\scshape Theorem}
\newtheorem{lem}[thm]{\scshape Lemma}
\newtheorem{prop}[thm]{\scshape Proposition}
\newtheorem{cor}[thm]{\scshape Corollary}
\newtheorem{ex}[thm]{\scshape Example}
\newtheorem*{conj}{\scshape Conjecture}
\newtheorem{other}{\sc Theorem}				 

\theoremstyle{definition}
\newtheorem{rem}[thm]{\scshape Remark}
\theoremstyle{plain}
\begin{document}

\title{On extremal problems related to the Krzy\.{z} conjecture}

\author[S. M. Instanes]{Sarah May Instanes}
\address{Department of Mathematical Sciences, Norwegian University of Science and Technology, NO-7491 Trondheim, Norway}
\email{sarah.m.instanes@ntnu.no}

\author[A. Llinares]{Adri\'an Llinares}
\address{Departamento de Matem\'aticas, Universidad Aut\'onoma de Madrid, 28049 Madrid, Spain}
\email{adrian.llinares@uam.es}
\urladdr{https://sites.google.com/view/adrian-llinares/home}

\author[D. Vukoti\'c]{Dragan Vukoti\'c}
\address{Departamento de Matem\'aticas, Universidad Aut\'onoma de Madrid, 28049 Madrid, Spain}
\email{dragan.vukotic@uam.es}
\urladdr{http://www.uam.es/dragan.vukotic}

\date{October 18, 2026}

\subjclass[2020]{Primary 30C50; Secondary 30H05, 30J05}
\keywords{Krzy\.z conjecture, bounded non-vanishing functions, coefficient problems, extremal functions, singular inner functions, Carath\'eodory class}

\begin{abstract}
We study general extremal problems similar or closely related to the Krzy\.z conjecture and establish the structural properties of extremal functions, pointing out some differences with the original Krzy\.z case. We also introduce a family of more specific problems of this kind, interpolating between the Krzy\.z problem and the problem of maximizing the constant term, and show that they are closely related to the Krzy\.z conjecture: the conjecture implies the analogous statement for all of them and, conversely, if one of them has the same extremal value as the Krzy\.z problem, then this value is $2/e$. This may represent a new approach to attacking the original problem. Finally, we show that for these problems an extremal function whose Taylor coefficients $a_1,\ldots,a_{\lfloor n/3\rfloor}$ vanish must be the conjectured one; as a special case, we obtain a new proof of a recent result of MacDonald.
\end{abstract}
\maketitle

\section{Introduction}
 \label{sect:intro}
\par
We study extremal problems for bounded non-vanishing analytic functions in the unit disc $\D$ that generalize the well-known Krzy\.z problem. Denote by $\cb_\ast$  the class of all analytic functions $f$ in the unit disc $\D$ such that $0<|f(z)|\le 1$ for all $z$ in $\D$. If $f(z)= \sum_{j=0}^\infty a_j z^j$ in $\D$, let
\begin{equation}
 M_n = \sup \left\{ \re a_n\,\colon\,f\in\cb_\ast \right\}, \quad n\ge 1.
 \label{eq:extr-ref} 									
\end{equation}
Standard arguments involving normal families show that the supremum is attained for each $n\ge 1$, hence an extremal function always exists. Since instead of $f(z)$ we can consider $\mu f(\la z)$ with $|\la|= |\mu|=1$, after such rotations we may assume from the start that both $a_0$ and $a_n$ are real and positive. The celebrated \textit{Krzy\.z conjecture\/}, posed in 1968 in \cite{Kr}, states that $M_n=2/e$ and the only extremal function normalized in this way is
\begin{equation}
f_n(z)= e^{\frac{z^n-1}{z^n+1}} = \frac{1}{e} + \frac{2}{e} z^n + \ldots
 \label{eq:extr-fcn} 									
\end{equation}
This problem provides the main inspiration for our study and is still open. Upper bounds, valid for all $n\ge 1$ simultaneously and strictly smaller than $1$ (with approximate values of $0.999\ldots$), were obtained in \cite{Ho} and \cite{Er}. The conjecture has been proved for $n\le 5$ (\textit{cf.\/} \cite{Kr}, \cite{HSZ}, \cite{Br2}, \cite{Ta}, \cite{Sz} and \cite{Sa}), including a recent  computer-assisted proof \cite{W} for $n=6$, but remains open to this date for all larger values of $n$. For further information and some closely related questions, we mention \cite{HSZ}, \cite{LS}, \cite{Sz}, \cite{MSUV}, and \cite{AM}, as well as the recent preprints \cite{LZ} and \cite{MD}, and \cite{ELMSUV}.
\par
We will use the following known facts about normalized extremal functions, which we collect here for convenience. The three parts of the statement can be found respectively in \cite{HSZ}, in both \cite{HSZ} and \cite{MSUV}, and in \cite[Theorem~1]{MSUV}.
\begin{other} \label{thm:A}
Let $n\ge 1$ and let $f(z)=\sum_{j=0}^\infty a_j z^j$ be a normalized extremal function for \eqref{eq:extr-ref}, that is, $a_0>0$ and $a_n=M_n$. Then:
		\begin{enumerate}[(i)]
\item \label{thm:Ai}
    $f(z)=\exp\(\sum_{j=1}^N r_j\frac{\a_j z+1}{\a_j z-1}\)$ for some $N$ with $1\le N\le n$, some $r_j>0$, and distinct points $\a_j$ with $|\a_j|=1$;
\item \label{thm:Aii} $a_n\ge 2a_0$;
\item \label{thm:Aiii} $a_n=2a_0$ if and only if $M_n=2/e$ and $f=f_n$.
\end{enumerate}
\end{other}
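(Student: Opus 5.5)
We sketch how the three parts of Theorem~\ref{thm:A} can be obtained, starting from the standard reduction. Any $f\in\cb_\ast$ can be written as $f=e^{-g}$ with $\re g\ge 0$. Up to a harmless limiting argument (replace $f$ by $f(rz)$, or note that the functional only sees the first $n+1$ coefficients), $g$ is $a_0'$ times a Carath\'eodory function. Thus $g(z)=\int_\T \frac{1+\bar\z z}{1-\bar\z z}\,d\mu(\z)$ plus an imaginary constant, for a finite positive measure $\mu$. Since $a_n$ depends only on the first $n$ Taylor coefficients of $g$, the problem becomes maximizing a polynomial functional $\Phi(c_1,\dots,c_n)$, where $c_k=2\int\bar\z^k d\mu$.

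\textbf{Part (i).} We fix $\|\mu\|=t$ and apply a Carath\'eodory-type moment argument. The set of admissible trigonometric moment vectors $(c_1,\dots,c_n)$ for measures of mass $t$ is a compact convex set in $\C^n\cong\R^{2n}$. The real part of the extremal coefficient is a smooth function on this set. We then use the variational (Lagrange) condition at a maximizer. It says that the supporting functional $\re\sum_k \la_k\bar\z^k$, a real trigonometric polynomial of degree $\le n$, attains its maximum over $\T$ on $\operatorname{supp}\mu$. A nonconstant such polynomial has at most $n$ maximum points on $\T$. Hence $\mu$ is atomic with at most $n$ atoms, which gives the representation with $N\le n$, $r_j>0$ and distinct $\a_j$. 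The main obstacle is excluding the degenerate case in which the Lagrange polynomial is constant, since that case would allow $\mu$ to be diffuse. We would rule it out by showing the gradient of the functional cannot vanish identically at an extremal point. Concretely, we would perturb by $\mu\mapsto\mu+\varepsilon\delta_\z$ and use that $\partial a_n/\partial c_n=-a_0/2\ne0$ after normalization.

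\textbf{Parts (ii) and (iii).} For (ii) we compare $f$ with competitors of the form $f(z)\,e^{\varepsilon\frac{z^n-1}{z^n+1}}$, which remain in $\cb_\ast$. Multiplying by $e^{\varepsilon(z^n-1)/(z^n+1)}=e^{-\varepsilon}(1+2\varepsilon z^n+\dots)$ changes the $n$-th coefficient to $e^{-\varepsilon}(a_n+2\varepsilon a_0)+O(\varepsilon^2)$. Because $a_n$ is maximal, the first-order term in $\varepsilon$ must be nonpositive for both signs of $\varepsilon$. Negative $\varepsilon$ is allowed as long as $|f|e^{-\varepsilon\cdot}\le1$ persists. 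This holds because $f$ has boundary modulus $1$ a.e.\ by (i), but only if the perturbation is compensated. The cleanest route is to use $\varepsilon>0$ only, together with the perturbation by the rotated factor $f(z)^{1+\varepsilon}$, i.e.\ $f^p$ with $p$ near $1$. Differentiating $\re[z^n]f^p$ at $p=1$ gives $\re[z^n](f\log f)=0$. Combining this with the $\varepsilon>0$ inequality $-a_n+2a_0\le 0$ yields $a_n\ge 2a_0$.

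For (iii), equality $a_n=2a_0$ makes the first-order variation vanish in every direction built from $\frac{z^n-1}{z^n+1}$. Averaging $f$ over the rotations $z\mapsto \om z$ with $\om^n=1$ preserves $a_0$ and $a_n$ and stays in $\cb_\ast$ by convexity of $\log$-type arguments (use geometric means $\prod_\om f(\om z)^{1/n}$). This reduces to $f(z)=F(z^n)$ with $F\in\cb_\ast$, where $F$ is extremal for $n=1$. For $n=1$ the classical computation $\max(-2a_0\log a_0)=2/e$ at $a_0=1/e$ gives $F(z)=e^{(z-1)/(z+1)}$. Finally, strictness in the geometric-mean inequality forces $f$ itself to equal $f_n$.
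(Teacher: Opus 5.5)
Your arguments for (i) and (ii) are sound, but the argument for (iii) has a genuine gap.

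\textbf{The gap in (iii).} The rotation-averaging step is false.

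\emph{Geometric mean.} Write $f=e^g$ with $g=\sum_j b_jz^j$. The geometric mean $\prod_{\om^n=1}f(\om z)^{1/n}=\exp(W_ng)$ does lie in $\cb_\ast$, but its $n$-th coefficient is $a_0b_n$. By Lemma~\ref{lem:exp-depend}, $a_n=a_0b_n+\sum_{j=1}^{n-1}\frac jn\,a_{n-j}b_j$, so $a_n$ is not preserved.

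\emph{Arithmetic mean.} $W_nf$ preserves $a_0$ and $a_n$, but it may vanish in $\D$. For the one-atom function $h_r$ of Example~\ref{ex:more-atoms}, $h_r(iy)+h_r(-iy)=2\re h_r(iy)$ vanishes for some $y\in(0,1)$ once $r>\pi/2$. So $W_nf$ need not lie in $\cb_\ast$; this is exactly why Theorem~\ref{thm:Uniqueness} needs uniqueness to get $f=W_nf$.

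There is also no ``geometric-mean inequality'' whose equality case could force $f=f_n$. The decisive symptom is that the hypothesis $a_n=2a_0$ is never used. If the reduction worked, every normalized extremal function would equal $f_n$ for every $n$, which is the open Krzy\.z conjecture itself.

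\textbf{A repair along your own lines.} Perturb by one atom at a time: $f\,e^{-\varepsilon\frac{1+\bar\z z}{1-\bar\z z}}\in\cb_\ast$ for $\varepsilon>0$ and each $\z\in\T$. Your symmetric factor is the product of such factors with mass $\varepsilon/n$ at the points $\z^n=-1$. Maximality gives
\[
 Q(\z):=a_n+2\re\sum_{k=1}^{n}a_{n-k}\bar\z^k\ge 0,\qquad \z\in\T .
\]
Averaging $Q$ over $\om\z$ with $\om^n=1$ gives $a_n+2a_0\re\bar\z^n\ge0$, which is (ii).

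Now suppose $a_n=2a_0$.
\begin{enumerate}
\item The average vanishes where $\z^n=-1$, so $Q$ vanishes at these $n$ points, to second order since $Q\ge0$.
\item Subtracting $a_0|1+\z^n|^2$ leaves $2\re\sum_{k=1}^{n-1}a_{n-k}\bar\z^k$. This trigonometric polynomial has degree at most $n-1$ but $2n$ zeros counting multiplicity, so $a_1=\dots=a_{n-1}=0$, and hence $b_1=\dots=b_{n-1}=0$.
\item With at most $n$ atoms from (i), the Vandermonde argument of \cite{MSUV} (invoked at the end of the proof of Theorem~\ref{thm:Gn}) gives $n$ equal masses at the $n$-th roots of a unimodular number.
\item The normalization $a_n>0$ then yields $f(z)=\exp\bigl(t\frac{z^n-1}{z^n+1}\bigr)$, and $2te^{-t}=a_n=2a_0=2e^{-t}$ forces $t=1$. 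So $f=f_n$ and $M_n=2/e$.
\end{enumerate}
The converse direction of (iii) is trivial.

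\textbf{On (i) and (ii).} The paper only quotes Theorem~\ref{thm:A}, but it reproves (i) in greater generality in Theorem~\ref{thm:structure} by a different mechanism. With $a_0$ fixed, the coefficient vector of $-g$ cannot lie in the open convex region $\rad_n$ where the Toeplitz matrix is positive definite, since the several-variable open mapping theorem would give a better competitor. So some Toeplitz determinant of order $N\le n$ vanishes, and the Carath\'eodory--Toeplitz theorem yields $N$ atoms.

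Your first-order argument on the compact convex moment body is a valid alternative. The ``degenerate case'' is no real obstacle: $a_n$ is a holomorphic polynomial in $(c_1,\dots,c_n)$ with $\partial a_n/\partial c_k=-a_{n-k}$. Hence your multiplier polynomial is a positive multiple of $a_n-Q$, of degree exactly $n$ since $a_0\ne0$. The perturbation $\mu+\varepsilon\delta_\z$ leaves the fixed-mass slice and is not needed. Your route produces this extremal polynomial as a by-product. The paper's route needs only that $h$ be non-constant, so it also covers functionals whose gradient might vanish at an extremal point.

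In (ii), the $\varepsilon>0$ inequality alone suffices, since the new coefficient is exactly $e^{-\varepsilon}(a_n+2\varepsilon a_0)$. The claim about ``both signs of $\varepsilon$'' is wrong in general, because it would force $a_n=2a_0$. The $f^p$ variation is superfluous.
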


Our purpose is to consider general coefficient extremal problems for non-vanishing bounded analytic functions. A useful survey of extremal  problems of this kind is \cite{BK}. A comprehensive treatment of various linear and non-linear coefficient extremal problems can be found in the classical reference \cite{KhS}.
\par
Let $f(z)=\sum_{j=0}^\infty a_j z^j$ be analytic in $\D$. If $|f(z)|\le 1$ for all $z\in\D$, the Schwarz--Pick lemma yields $|a_1|\le 1-|a_0|^2$. The F.~Wiener transform $W_n f$ of $f$ \cite[Section~1.7]{MSUV} is defined by
\begin{equation}
 W_n f(z) = \frac{1}{n} \sum_{j=0}^{n-1} f\( e^{(2\pi j i)/n} z\) = \sum_{j=0}^\infty a_{j n} z^{j n} = G(z^n),
  \label{eq:wiener} 	
\end{equation}
where $G$ is also analytic in $\D$ and bounded by $1$. Since $G(0)=a_0$ and $G^\prime (0)=a_n$, the Schwarz--Pick lemma applied to $G$ yields $|a_n|\le 1-|a_0|^2$ for all $n\ge 1$. Moreover, if $f\in\cb_\ast$, then clearly $|a_n|<1$ for all $n\ge 1$ and $|a_0|=1$ if and only if $f$ is a unimodular constant. In view of this, if we assume that $h: \overline{\D}^{n+1} \to \C$ is holomorphic in $\D^{n+1}$, continuous in $\overline{\D}^{n+1}$, and non-constant, then the coefficient functional of the form $\F (f) = h(a_0,a_1,\ldots,a_n)$ is well defined for every $f\in\cb_\ast$. We study the associated extremal problem of finding
\begin{equation}
 F_n = \sup \left\{ \re \F (f)\,\colon\, f\in\cb_\ast \right\}.
 \label{eq:extr-alt} 									
\end{equation}
The Krzy\.z problem corresponds to the special case $h(a_0,a_1,\ldots,a_n)=a_n$. Our Theorem~\ref{thm:structure}, formulated in Section~\ref{sect:gen-probl-struct}, shows that the maximum in \eqref{eq:extr-alt} is always attained and, moreover, every extremal function is either a unimodular constant or a singular inner function with at most $n$ atoms; that is, a unimodular multiple of a function like $f$ in Theorem~\ref{thm:A}~\ref{thm:Ai}. This has a flavor of other known results but we have not been able to find a complete proof of the result in this generality in the literature. We also show that if the function $h$ defining the functional $\Phi$ does not depend on the variable $a_0$, then all extremal functions are non-constant and, thus, have the same structure as the ones for the original Krzy\.z problem. On the other hand, we give specific non-trivial examples (where $\F$ also depends on other variables) showing that an extremal function can actually be constant. It should be mentioned that, for a fixed value of $\log a_0$, Romanova \cite{R} studied the extremal problem for bounded non-vanishing analytic functions for the linear functional of the form $\F (f) = c_1 a_1 + \ldots + c_n a_n$ and derived a similar structure using an extremal problem for functions with positive real part and Loewner chains.

We also discuss some issues related to normalization, since for certain problems we cannot freely assume that $f(0)>0$ because of the risk of losing the structure proved in Theorem~\ref{thm:structure}. Examples are given to illustrate some differences in such cases. For a smaller class of functionals $\Phi$ that only depend on the variables $a_0$ and $a_n$, in Section~\ref{sect:gen-probl-struct} we show that if a normalized extremal function is unique, it must be a function of $z^n$ (Theorem~\ref{thm:Uniqueness}).

This last situation, together with some known results for the Krzy\.z conjecture, motivates our Section~\ref{sect:spec-probl}, where we study a special family of such functionals $\F$ depending only on the coefficients $a_0$ and $a_n$ of $f$. In view of parts (ii) and (iii) of Theorem~\ref{thm:A}, it seems quite appropriate to study convex combinations of the functionals $\F_0 (f) = a_n$ and $\F_1 (f) = 2 a_0$. It also makes sense to restrict our attention to a smaller class containing all candidates for extremal functions for the Krzy\.z problem. For $n\ge 1$ we define $\cb_n$ as the set of all functions $f\in\cb_\ast$ which are singular inner functions with at most $n$ atoms and for which $a_n\ge 2 a_0>0$.

Next, let $p$ be a holomorphic function on $\D^2$ such that for $x$, $y$ real and $0<2x<y<1$, the value $p(x,y)$ is real with $0<p(x,y)\le 1$. By continuity, for $0<x<1/2$ we have $0\le p(x,2x)\le 1$. Constant functions $p$ are allowed. Under these restrictions on $p$, consider the functionals
\begin{equation}
 \F_p (f) = 2 p(a_0,a_n)\,a_0 + (1-p(a_0,a_n))\,a_n, \qquad f\in \cb_n,
 \label{eq:spec-funct} 									
\end{equation}
where $a_0$ and $a_n$ are the usual coefficients of the function $f$  and consider
\begin{equation}
		G_n = G_n(p) = \sup \left\{ \re \F_p (f)\,\colon\,f\in {\cb_n} \right\},
		\label{eq:extr-alt-p} 						
\end{equation}
(We write $G_n(p)$ when the dependence on $p$ needs to be emphasized.) This is of interest since it forces the inequalities $2a_0\le \F_p (f)\le a_n$ for every $f\in \cb_n$; moreover, $\F_p (f) = a_n$ holds if and only if $a_n=2 a_0$, which is precisely the condition known to imply $f=f_n$ for an extremal function $f$ of the Krzy\.z problem. The following result from Section~\ref{sect:spec-probl} connects the problems \eqref{eq:extr-ref} and \eqref{eq:extr-alt-p}.
    \begin{thm} \label{thm:rel-probl}
Let $n\ge 1$ and let $\F_p$ be an admissible functional of the form \eqref{eq:spec-funct}.
	\begin{enumerate}[(a)]
\item \label{thm:rel-probl:a}
If the function $f_n(z)= e^{\frac{z^n-1}{z^n+1}}$ is extremal for the Krzy\.z problem \eqref{eq:extr-ref}, that is, if $M_n=2/e$, then $G_n(p)=2/e$ and $f_n$ is the only extremal function for \eqref{eq:extr-alt-p}.
\item \label{thm:rel-probl:b}
If $G_n(p)=M_n$, then $M_n=2/e=G_n(p)$, the function $f_n$ is extremal for \eqref{eq:extr-ref}, and it is the only extremal function for \eqref{eq:extr-alt-p}. In particular, if
$$
 \sup \left\{ 2 a_0\,\colon\,f\in\cb_n \right\} = \sup \left\{ a_n\,\colon\,f\in\cb_n \right\},
$$
then $M_n=2/e$ and $f_n$ is extremal for the Krzy\.z problem \eqref{eq:extr-ref}.
\item \label{thm:rel-probl:c}
If there is only one extremal function for \eqref{eq:extr-alt-p}, then $G_n(p)=2/e$ and this function is $f_n$.
\end{enumerate}
    \end{thm}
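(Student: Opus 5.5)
The whole argument rests on the sandwich inequality $2a_0\le \F_p(f)\le a_n$ for $f\in\cb_n$, together with Theorem~\ref{thm:A}. The inequality holds because $\F_p(f)$ is a convex combination of $2a_0$ and $a_n$ with weight $p(a_0,a_n)\in[0,1]$, and $2a_0\le a_n$ by the definition of $\cb_n$. Equality $\F_p(f)=a_n$ holds iff $p(a_0,a_n)(a_n-2a_0)=0$, which for $a_n>2a_0$ forces $a_n=2a_0$ since $p>0$ there. Note also that $f_n\in\cb_n$ and $\F_p(f_n)=2/e$, because $a_n=2a_0=2/e$ for $f_n$. Finally, every normalized extremal function for \eqref{eq:extr-ref} lies in $\cb_n$ by Theorem~\ref{thm:A}(i),(ii), so $\sup_{\cb_n} a_n = M_n$. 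Hence $2/e\le G_n(p)\le M_n$ in all cases.

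\emph{Part (a).} If $M_n=2/e$, the chain gives $G_n(p)=2/e$, so $f_n$ is extremal for \eqref{eq:extr-alt-p}. For uniqueness, let $f\in\cb_n$ be extremal. Then $2/e=\F_p(f)\le a_n\le M_n=2/e$, so $a_n=M_n$ and $f$ is a normalized Krzy\.z extremal (here $a_0>0$ comes from the definition of $\cb_n$). Moreover $\F_p(f)=a_n$, which forces $a_n=2a_0$, and Theorem~\ref{thm:A}(iii) then gives $f=f_n$.

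\emph{Part (b).} Assume $G_n(p)=M_n$. First I need the supremum in \eqref{eq:extr-alt-p} to be attained, and this is the delicate step. I would use a normal families argument. Take a maximizing sequence in $\cb_n$, with atoms and masses $(\a_j,r_j)$, $N\le n$. The masses stay bounded because $a_0=e^{-\sum r_j}\ge$ something positive: indeed $a_n\ge G_n-\varepsilon>0$ and $a_n\le 1-a_0^2$ does not bound this directly. Instead I would use that $\F_p(f)\to M_n>0$ while $a_0\to0$ forces $a_n\to 0$, since the $n$th coefficient of a singular inner function with total mass $\to\infty$ and at most $n$ atoms tends to $0$ (equivalently, locally uniform limits are $\equiv0$). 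Passing to subsequences, atoms may merge, but the limit is still singular inner with at most $n$ atoms. Continuity of $p$ and of the coefficients, together with $a_n\ge2a_0>0$ being preserved, shows the limit $f$ lies in $\cb_n$ and is extremal.

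Then $M_n=\F_p(f)\le a_n\le M_n$, so $f$ is a normalized Krzy\.z extremal with $\F_p(f)=a_n$. Hence $a_n=2a_0$, and Theorem~\ref{thm:A}(iii) yields $M_n=2/e$ and $f=f_n$. Uniqueness then follows from part (a). For the ``in particular'' statement, take $p\equiv1$, so that $G_n(1)=\sup 2a_0$ and $\sup_{\cb_n}a_n=M_n$. Here $p\equiv 1$ is admissible, with the equality case handled directly because $p>0$.

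\emph{Part (c).} I would use a symmetry argument. If $f\in\cb_n$ is extremal, then so is $\overline{f(\bar z)}$, and so are the rotations $f(\om z)$ with $\om^n=1$. All of these lie in $\cb_n$ and have the same $a_0$ and $a_n$. By uniqueness $f(\om z)=f(z)$, so $f=g(z^n)$, where $g$ is singular inner with one atom in view of the at-most-$n$-atom count. Realness and $a_n>0$ then give $f(z)=e^{r(z^n-1)/(z^n+1)}$, with $a_0=e^{-r}$ and $a_n=2re^{-r}$; the condition $a_n\ge2a_0$ means $r\ge1$. The step I expect to be the main obstacle is showing that the maximum over $r\ge1$ is at $r=1$. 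My plan is to use that $f_n$ itself is a candidate. Since $2re^{-r}$ decreases for $r\ge1$, $e^{-r}$ decreases, and $\F_p\le a_n\le 2/e$ with equality only at $r=1$, we get $\F_p(f)\le 2/e=\F_p(f_n)$. Uniqueness then forces $f=f_n$ and $G_n(p)=2/e$.
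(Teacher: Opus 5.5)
Your proof is correct. Parts (a) and (b) are essentially the paper's argument: the sandwich $2a_0\le\F_p(f)\le a_n\le M_n$ on $\cb_n$, plus Theorem~\ref{thm:A}~\ref{thm:Aiii}. In (a) you rightly observe that no existence result is needed, because $f_n$ already attains $2/e$. Your normal-families sketch in (b) is exactly the content of Proposition~\ref{prop:existence}, which you may simply cite.

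The genuine difference is in (c). The paper uses only that the unique extremal is $g(z^n)$ with $g\in\cb_\ast$, $g(0)=a_0$, $g'(0)=a_n$. It then invokes the classical case $n=1$ of the Krzy\.z conjecture ($g'(0)\le 2/e$, with equality only for $e^{\frac{z-1}{z+1}}$). That route needs nothing about the atoms and works over any rotation-invariant class containing $f_n$.

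You instead use the atom bound built into $\cb_n$. The atoms of $f$, with their masses, are invariant under multiplication by a primitive $n$-th root of unity $\om$, and every orbit has $n$ points. So $f$ has exactly $n$ equally weighted atoms $\a\om^k$. The identity $\sum_{k=0}^{n-1}\frac{1+\a\om^k z}{1-\a\om^k z}=n\,\frac{1+\a^n z^n}{1-\a^n z^n}$ together with $a_n>0$ forces $\a^n=-1$; the conjugation symmetry $\overline{f(\bar z)}$ is superfluous. This gives $f=e^{r\frac{z^n-1}{z^n+1}}$ with total mass $r\ge 1$. Everything then reduces to $2re^{-r}\le 2/e$, with equality only at $r=1$.

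Your approach is self-contained and shows, as a by-product, that the unique extremal must have the full number $n$ of atoms. The cost is that it depends on the restriction to at most $n$ atoms. Since this orbit count is the only nontrivial step of your (c), write it out rather than summarizing it as ``in view of the at-most-$n$-atom count''.
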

Note that the original Krzy\.z problem can be viewed as the limit case of \eqref{eq:extr-alt-p} when $p$ is constant and tends to zero. Thus,  investigating this general problem may indicate a new way of attacking the original conjecture. In particular, \eqref{eq:extr-alt-p} is confirmed to be true for $n\le 5$, as expressed by Corollary~\ref{cor:small-n}, and it seems legitimate to conjecture that $\max\{\F_p (f)\,:\,f\in\cb_n\}=2/e$ for all $n$, with the same expected extremal function $f_n$ as for the normalized Krzy\.z problem.
\par
Section~\ref{sect:extra-assumpt} concerns extremal functions for the special functionals $\F_p$ whose initial coefficients are assumed to vanish. There are several statements of this type in the literature. For the Krzy\.z problem, Brown \cite{Br1} showed that if $a_j=0$ for $1\le j <(n +1)/2$ for an extremal function, then the function must be $f_n$. He actually proved a more general version for $H^p$ spaces posed in \cite{HSZ}. In the Krzy\.z setting, a general theorem of this type involving the so-called inductive sets was later given in \cite{MSUV}. Recently, MacDonald \cite{MD} improved this by proving that if an extremal function $f$ for the Krzy\.z problem it suffices to assume only $a_1=\ldots=a_{\lfloor n/3\rfloor}=0$ to conclude that $f=f_n$ (see Corollary~\ref{cor:MacDonald} below). Here $\lfloor x\rfloor = \max\{k\in\Z:k\le x\}$ denotes the integer part of a real number $x$. Note that \cite{MD} uses the notation $l=\lceil \frac{n-2}{3} \rceil$, which gives the same integer as $\lfloor n/3\rfloor$.
	\begin{thm}\label{thm:Gn}
		Let $n\ge 6$, let $p$ be an admissible weight or $p\equiv 0$, and let $f(z)=\sum_{j=0}^\infty a_j z^j$ be a fixed extremal function for \eqref{eq:extr-alt-p} (for $p\equiv 0$, this means that $f$ is a normalized extremal function for the Krzy\.z problem \eqref{eq:extr-ref}). Then the following statements are equivalent:
		\begin{enumerate}[(a)]
			\item \label{thm:Gna} $a_1 = a_2 = \ldots = a_l = 0$ for $l = \lfloor n/3 \rfloor$.
			\item \label{thm:Gnb} $f(z) = e^{\frac{z^n-1}{z^n+1}}$. \end{enumerate}
If any of the above conditions is satisfied, then $G_n(p) = \frac{2}{e}$.
	\end{thm}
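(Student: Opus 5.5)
The implication (b)$\Rightarrow$(a) is immediate, since $f_n$ is a function of $z^n$. For (a)$\Rightarrow$(b) we use the structure of extremal functions. By Theorem~\ref{thm:structure}, $f$ is non-constant; for $p\equiv0$ this is also Theorem~\ref{thm:A}\ref{thm:Ai}, and for admissible $p$ it holds by the definition of $\cb_n$. Hence
\[
f=\exp\Big(-\sum_{j=1}^N r_j\,\frac{1+\bar\a_j z}{1-\bar\a_j z}\Big)=e^{-w(z)},\qquad N\le n,\; r_j>0,
\]
where $w=b_0+2\sum_{k\ge1}b_kz^k$ with $b_k=\sum_j r_j\bar\a_j^{\,k}$ and $b_0=\sum_j r_j$. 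Writing $\log f=-w$ and using $a_1=\dots=a_l=0$, we get $b_1=\dots=b_l=0$. The generalized Newton identities then give $a_k=a_0\,c_k$, where $c_k$ is the $k$-th coefficient of $\exp(-2\sum b_jz^j)$. For $k\le n$ with $3(l+1)>n$, only the terms of degree at most $2$ in the $b_j$ survive:
\[
a_n = a_0\Big(-2b_n+2\sum_{i+j=n,\ i,j>l} b_ib_j\Big).
\]
This is exactly the threshold $l=\lfloor n/3\rfloor$.

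Next we derive a variational condition. Perturbing the masses $r_j$ and the atoms $\a_j$ within $\cb_\ast$, and using that extremality for $\F_p$ forces a Lagrange-type condition, we obtain, as in the Hummel--Scheinberg--Zalcman argument behind Theorem~\ref{thm:A}\ref{thm:Aii}, that the trigonometric polynomial
\[
T(\theta)=\re\Big(\lambda_0+\sum_{k=1}^n \lambda_k\, e^{-ik\theta}\Big)
\]
built from the derivatives of $\F_p$ attains its minimum value on $\T$ at every $\a_j$, with the minimum equal to $0$. For $\F_p$ the coefficients $\lambda_k$ involve only $\partial a_n/\partial a_k$-type quantities, namely the coefficients of $f\cdot z^{-n}$ truncated, i.e. $\bar a_{n-k}$ up to the weights coming from $p$ and from $\partial_{a_0}p$, $\partial_{a_n}p$. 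The vanishing of $a_1,\dots,a_l$ kills the frequencies $n-l,\dots,n-1$. So $T$ is a nonnegative trigonometric polynomial whose spectrum lies in $\{0,\dots,n-l-1\}\cup\{n\}$.

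We then combine the two pieces of information. Using Fejér--Riesz, write $T=|Q|^2$. The gap in the spectrum together with $r_j>0$ at the zeros gives
\[
\sum_j r_j\,\bar\a_j^{\,k}\,T(\a_j)=0 .
\]
Integrating the extremality identity against $e^{ik\theta}$ then yields linear relations among the $b_k$. Feeding these into the quadratic formula for $a_n$, the goal is the key inequality $a_n\le 2a_0$. For $p\equiv0$ this combines with Theorem~\ref{thm:A}\ref{thm:Aii}. For admissible $p$ it follows from the constraint $a_n\ge 2a_0$ in $\cb_n$, since then $\F_p(f)$ is squeezed between $2a_0$ and $a_n$. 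In either case $a_n=2a_0$, and Theorem~\ref{thm:A}\ref{thm:Aiii}, or its analogue for $\F_p$ via Theorem~\ref{thm:rel-probl}, gives $f=f_n$ and $G_n(p)=2/e$.

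To get $a_n\le 2a_0$, the idea is the following. Since $b_i\bar{}$-terms appear with $i,j>l\ge n/3$, the quadratic sum $\sum_{i+j=n}b_ib_j$ pairs indices lying in $(n/3,2n/3)$. Via the orthogonality relations above, this sum should be controllable by $\re b_n$ and by $\sum_{l<k<n-l}|b_k|^2$ with a favourable sign, since $-\re b_n\le b_0$ and the $b_k$ are Fourier coefficients of a positive measure (Carathéodory class). So we would show
\[
\re\Big(-b_n+\sum_{i+j=n,\ i,j>l} b_ib_j\Big)\le 1,
\]
with equality only when all $b_k=0$ for $0<k<n$. Equality then forces $w$ to be a function of $z^n$, giving $f=f_n$.

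The main obstacle is this step, namely proving the sharp inequality using the extremality relations. A Carathéodory bound alone is insufficient, and the crux is exploiting that $T$ vanishes at each atom with the specific spectral gap. We would first try to reproduce MacDonald's approach: express $b_k$ through the normal equations $\sum_j r_j \bar\a_j^{\,k}=\mu_k$ forced by the vanishing of $T$, and then apply a Cauchy--Schwarz estimate on the pairing $i\leftrightarrow n-i$.
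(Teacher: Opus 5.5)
Your proof of (b)$\Rightarrow$(a) is fine. Your expansion $a_n=a_0\bigl(-2b_n+2\sum_{i+j=n,\ i,j>l}b_ib_j\bigr)$, with the threshold $3(l+1)>n$, is correct and is what the paper gets from Lemma~\ref{lem:coeff-formula}. The gap is everything after that. The ``key inequality'' is never proved, as you acknowledge, and it is the wrong target.

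\textbf{The target $a_n\le 2a_0$ does not follow from the hypothesis.} Take $f_r(z)=e^{-r\frac{1-z^n}{1+z^n}}$ with $r>1$. It lies in $\cb_n$, has $b_1=\dots=b_{n-1}=0$ and $b_n=-r$ in your notation. So $\re\bigl(-b_n+\sum b_ib_j\bigr)=r>1$ and $a_n=2re^{-r}>2a_0$. Your $b_k$ are not normalized: $-\re b_n\le b_0=-\log a_0$ gives no bound by $1$.

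\textbf{Extremality does not rescue it.} For an extremal function, $a_n\le 2a_0$ is, by Theorem~\ref{thm:A}\ref{thm:Aiii}, equivalent to the conclusion $f=f_n$ itself, so the reduction gains nothing. Your sketch (Fej\'er--Riesz, ``orthogonality relations'', Cauchy--Schwarz on $i\leftrightarrow n-i$) supplies no mechanism for it. Termwise Carath\'eodory bounds on the quadratic sum grow with $n-2l$.

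\textbf{Admissible $p$ adds further problems.} The problem over $\cb_n$ is constrained: $a_n\ge 2a_0$ and at most $n$ atoms, and for $p\equiv1$ one even has $\partial\Phi_p/\partial a_n=0$. So an HSZ-type condition ``$T\ge 0$ on $\T$ with zeros at the atoms'' is not available. Even $a_n=2a_0$ would not give $f=f_n$: Theorem~\ref{thm:A}\ref{thm:Aiii} concerns Krzy\.z extremals, and Theorem~\ref{thm:rel-probl} gives no analogue for $\Phi_p$.

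\textbf{The missing idea: bound $a_n$ absolutely, with no variational information.} Write $f=e^{-tg_0}$ with $g_0=1+\sum q_jz^j\in\cp$, so $q_j=2b_j/t$ in your notation. Your formula becomes $a_n=-te^{-t}\bigl(q_n-\frac t2\sum_{j=l+1}^{n-1-l}q_jq_{n-j}\bigr)$. The bracket is exactly Efraimidis' determinant $A_{n-1-l,\,l+1}(t/2)$ (Lemma~\ref{lem:det1}); the same condition $3(l+1)>n$ makes the Schur complement computation collapse.

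Lemma~\ref{lem:Efraimidis}, with the refinement of Lemma~\ref{lem:ImprovementDet1} for $t\ge 2$, then gives $a_n\le 2te^{-t}\max\{1,t-1\}\le 2/e$. This holds for every $f\in\cb_\ast$ with $a_0>0$ and $a_1=\dots=a_l=0$, with equality only at $t=1$.

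Extremality enters only through $2/e=\Phi_p(f_n)\le G_n(p)=\Phi_p(f)\le a_n$, which forces equality throughout. The equality case of Efraimidis' lemma at $w=1/2$ gives $q_1=\dots=q_{n-1-l}=0$ and places the atoms on a rotated set of $n$-th roots of unity. Then $b_{n-k}=\lambda\overline{b_k}$ kills the remaining coefficients, and the Vandermonde argument of \cite{MSUV} gives $f=f_n$. This handles $p\equiv 0$ and all admissible $p$ uniformly, and yields $G_n(p)=2/e$ without ever passing through $a_n=2a_0$.
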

Corollary~\ref{cor:MacDonald} also covers the case $p\equiv 0$ of Theorem~\ref{thm:Gn}, thus giving a new proof of MacDonald's theorem.
Our proof differs a great deal from MacDonald's. It is based on the determinant estimates of Efraimidis \cite{Ef} (generalizing some classical results by Livingston \cite{L}).
\par\medskip
\section{Structural results for general non-linear extremal problems}
 \label{sect:gen-probl-struct}

\subsection{Existence and structure of extremal functions}
 \label{subsect:struct}
Recall that, by a theorem of Fatou, every bounded analytic function in $\D$ has radial limits $\widetilde{f} (e^{i\t})=\lim_{r\to 1^-} f(r e^{i\t})$ for almost every $\t\in [0,2\pi)$. A function bounded by one and such that $|\widetilde{f} (e^{i\t})|=1$ almost everywhere is said to be \textit{inner\/}. For a positive integer $N$, a \textit{singular inner function with $N$ atoms\/} is a function of the form
\begin{equation}
 f(z)= \beta \,\exp\(\sum_{j=1}^N r_j \frac{\a_j z + 1}{\a_j z - 1}\), \quad \quad r_j> 0, \quad |\a_j|=|\b|=1, \ 1\le j\le N.
 \label{eq:str-extr-fcn}
\end{equation}
where the points $\a_1,\ldots,\a_N$ are distinct. It is known that every extremal function for the Krzy\.z problem \eqref{eq:extr-ref} must have this form with $N\le n$; see Theorem~\ref{thm:A}~\ref{thm:Ai}. In spite of this information, proving that the only normalized extremal function is the function $f_n$ defined in Section~\ref{sect:intro} has remained elusive up to the time of writing this paper.

In this section we show that all extremal functions for the general problem \eqref{eq:extr-alt} must have a structure very similar to \eqref{eq:str-extr-fcn}, with one difference: constant functions of modulus one can also be extremal in some cases. Our proof follows an idea for the original Krzy\.z problem suggested by Donald E. Marshall \cite{Ma}, which in this case requires a careful further discussion in view of the generality of the problem. To this end, the following auxiliary statement will be very useful.

    \begin{lem} \label{lem:OpenMapping}
	Let $n \geq 1$ and let $h$ be a non-constant holomorphic function on $\D^{n+1}$. Then there exists $F \in \cb_\ast$ such that
		\[
			\re \Phi (F) > \re h(0, \ldots, 0).
		\]
    \end{lem}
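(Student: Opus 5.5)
The plan is to exhibit a large explicit family of functions in $\cb_\ast$ whose coefficient vectors $(a_0,\ldots,a_n)$ fill an open cone with vertex at the origin of $\C^{n+1}$. Then we reduce to a one-variable statement along a complex line through $0$ inside that cone, where the maximum principle for harmonic functions finishes the argument. The natural family is polynomials with a dominant constant term. If $P(z)=b_0+b_1z+\cdots+b_nz^n$ satisfies
\[
 |b_0|>\sum_{j=1}^n |b_j| \quad\text{and}\quad |b_0|+\sum_{j=1}^n|b_j|\le 1,
\]
then for $z\in\D$ we have $|P(z)|\ge |b_0|-\sum_{j\ge1}|b_j|>0$ and $|P(z)|\le 1$. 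Hence $P\in\cb_\ast$, and its first $n+1$ Taylor coefficients are exactly $(b_0,\ldots,b_n)$, so that $\Phi(P)=h(b_0,\ldots,b_n)$.

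Next, consider the open set of directions
\[
 V=\Big\{v=(v_0,v_1,\ldots,v_n)\in\C^{n+1}\,\colon\, |v_0|>\sum_{j\ge1}|v_j|\Big\}.
\]
For $v\in V$ and $0<|\la|\le (\sum_{j\ge0}|v_j|)^{-1}$, the vector $\la v$ satisfies both conditions above, so $\la v$ is the coefficient vector of some $F\in\cb_\ast$. We claim there is $v\in V$ such that $\varphi_v(\la)=h(\la v)$ is non-constant near $\la=0$. Suppose instead that $\varphi_v$ were constant for every $v\in V$. Then $h\equiv h(0,\ldots,0)$ on the set $\{\la v: v\in V,\ |\la|\text{ small}\}$. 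This set contains a nonempty open subset of $\D^{n+1}$: for instance, a small neighbourhood of $(\epsilon,0,\ldots,0)$ with $\epsilon>0$ small lies in $V$ and, after scaling, inside the cone near $0$. By the identity theorem for holomorphic functions of several variables on the connected domain $\D^{n+1}$, $h$ would then be constant, contradicting the hypothesis.

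Fix such a $v$. Then $\varphi_v$ is a non-constant holomorphic function on a small disc $|\la|<\rho$, so $\re\varphi_v$ is a non-constant harmonic function there. By the maximum principle (or the mean value property: the average of $\re\varphi_v$ over a small circle equals $\re\varphi_v(0)$, and $\re\varphi_v$ is not identically equal to this value on every such circle), there is $\la$ with $0<|\la|<\rho$ and $\re\varphi_v(\la)>\re\varphi_v(0)=\re h(0,\ldots,0)$. Taking $F$ to be the polynomial with coefficient vector $\la v$ gives $F\in\cb_\ast$ with $\re\Phi(F)>\re h(0,\ldots,0)$.

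The only step needing care is the second one: the origin itself is not an admissible coefficient vector, since $a_0\neq 0$ is forced. We must therefore make sure that the directions we use form an open cone on which non-constancy of $h$ can be detected. This is exactly why we restrict to lines $\la v$ with $v\in V$ instead of arbitrary lines through $0$. The identity theorem on an open set resolves this, and the rest is routine.
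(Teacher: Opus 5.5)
Your proof is correct, and it is organized differently from the paper's. The paper first restricts $h$ to the $a_0$-axis. It uses the mean value property on the circle $|a_0|=\frac25$ to find a point $c_0$ satisfying only the weak inequality $\re h(c_0,0,\ldots,0)\ge \re h(0,\ldots,0)$; the restriction may well be constant. It then upgrades this to a strict inequality by applying the several-variable open mapping theorem on a connected neighbourhood of $(c_0,0,\ldots,0)$ inside an explicit open set $\Omega$ of coefficient vectors with dominant constant term.

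You use instead that the admissible coefficient set is an open complex cone with vertex at the origin. You first pick, via the identity theorem, a direction $v\in V$ along which $h$ is non-constant. A single application of the one-variable maximum principle at $\la=0$ then gives the strict inequality directly. In effect you inline the usual proof of the several-variable open mapping theorem (restrict to a line on which the function is non-constant) and never need the auxiliary point $c_0$. The paper's version, in exchange, works in one fixed explicit region and never has to select a direction.

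Two small points to tighten. First, ``$h\equiv h(0,\ldots,0)$ for $|\la|$ small'' lets the smallness depend on $v$, so it is not immediate that this set is open. Either fix closes the gap:
\begin{enumerate}[(i)]
\item Use the one-variable identity theorem. Since $\varphi_v$ is holomorphic on the connected disc $|\la|<1/\max_j|v_j|$, constancy near $0$ gives $h(v)=\varphi_v(1)=h(0,\ldots,0)$ for every $v\in V\cap\D^{n+1}$.
\item Note that $\varphi_v^{(k)}(0)/k!$ is the $k$-th homogeneous Taylor polynomial of $h$ at the origin evaluated at $v$, and a nonzero polynomial cannot vanish on the open set $V$.
\end{enumerate}
Second, take $|\la|$ strictly smaller than $\bigl(\sum_j|v_j|\bigr)^{-1}$ so that $\la v\in\D^{n+1}$, where $h$ is defined. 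Your final step already does this.
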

    \begin{proof}
        This statement can be deduced in more than one way. Here is one simple proof.
\par
The restriction of $h$ defined by $g(z)=h(z,0,\ldots,0)$, $z\in\D$, is clearly analytic in $\D$. By the mean value property of the harmonic function $\re g$,
    \[
 \re g(0) = \frac{1}{2\pi}\int_0^{2\pi} \re g\(\tfrac25 e^{i\t}\)\,d\t ,
    \]
hence there exists $c_0$ with $|c_0|=\frac25$ (in particular, $\frac13<|c_0|<\frac12$) such that $\re g(c_0) \ge \re g(0)$.
\par
Since $h$ is not constant in $\D^{n+1}$, it cannot be constant on the open set
    \[
 \Omega = \left\{ (a_0,a_1,\ldots,a_n)\,:\, \frac13<|a_0|<\frac12, \, |a_1|, \ldots , |a_n| <  \frac1{3n} \right\} \subset \D^{n+1}
    \]
by the identity principle for holomorphic functions of several complex variables. Now for the value $c_0$ such that $\frac13<|c_0|<\frac12$ and $\re g(c_0) \ge \re g(0)$ whose existence we have established, choose a connected open neighborhood $U\subset \O$ of the point $(c_0, 0,\ldots, 0) \in \O$. Since $h$ is not constant on $U$, the open mapping theorem shows that $h(U)$ is an open set, hence there exists a point $(a_0, a_1,\ldots, a_n)\in U$ such that
    \[
 \re h (a_0, a_1,\ldots, a_n) > \re h(c_0,0,\ldots,0),
    \]
hence
    \[
 \re h (a_0, a_1,\ldots, a_n) > \re h(0,0,\ldots,0).
    \]

It only remains to check that for any given $(a_0, a_1,\ldots, a_n) \in \O$ there exists a function $F$ in $\cb_\ast$ such that $\F (F) = h(a_0, a_1,\ldots, a_n)$. This is easily seen by considering the polynomial
    \[
 F(z) = a_0 + a_1 z + \ldots + a_n z^n.
    \]
By our choice of $\O$, we have $F\in \cb_\ast$ since for all $z\in\D$ we have $|F(z)|\le |a_0|+\sum_{j=1}^n|a_j|<\frac12+\frac13<1$ and $|F(z)|\ge |a_0|-\sum_{j=1}^n|a_j| > \frac13-\frac13=0$.
    \end{proof}

We are now ready to prove the existence of extremal functions for \eqref{eq:extr-alt} and describe their structure. Note that there are some natural additional complications which do not appear in the study of more special problems such as the basic Krzy\.z problem.
\par
    \begin{thm} \label{thm:structure}
Let $n \geq 1$ and let $h: \overline{\D}^{n+1} \to \C$ be a non-constant function, holomorphic in $\D^{n+1}$ and continuous in $\overline{\D}^{n+1}$. Then there exists an extremal function for \eqref{eq:extr-alt}. Every such function $f$ is either a constant function of modulus one or a singular inner function as in  \eqref{eq:str-extr-fcn}, where $1\le N\le n$.

Moreover, if $h$ does not depend on its first variable $a_0$, then all extremal functions must be of the form \eqref{eq:str-extr-fcn}.
    \end{thm}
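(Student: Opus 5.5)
Existence will come from a normal families argument. Take a maximizing sequence $f_k\in\cb_\ast$ with $\re\F(f_k)\to F_n$. By Montel's theorem a subsequence converges locally uniformly to some $f$ with $|f|\le1$, and the coefficients converge. By Hurwitz's theorem $f$ is either non-vanishing or identically zero. We will rule out $f\equiv 0$ using Lemma~\ref{lem:OpenMapping}. If $f\equiv0$, then continuity of $h$ on $\overline{\D}^{n+1}$ gives $F_n=\re h(0,\ldots,0)$. The lemma, however, produces $F\in\cb_\ast$ with $\re\F(F)>\re h(0,\ldots,0)$, which is a contradiction. So $f\in\cb_\ast$, and it attains the supremum.

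For the structure, let $f$ be extremal and non-constant (the constant case will be handled separately). Since $f$ is non-vanishing we can write $f=e^{-g}$ with $\re g\ge 0$. By the Herglotz representation, $g(z)=ic+\int_\T \frac{\z+z}{\z-z}\,d\mu(\z)$ for a positive measure $\mu$ and a real $c$; this is Marshall's idea. The coefficients $a_0,\ldots,a_n$ of $f$ are polynomial functions of $g(0)$ and of the moments $\int\bar\z^{\,k}d\mu$ for $1\le k\le n$.

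Next we fix the mass $\mu(\T)$ and the moments $\int\bar\z^{\,k}d\mu$, $k=1,\ldots,n$. These are $2n+1$ real linear constraints. The admissible positive measures form a convex weak-$*$ compact set on which $\F$ is constant. This set contains an extreme point, and by the standard Carathéodory/Krein--Milman counting argument (as in the Krzy\.z case) such an extreme point is supported on at most $n$ points. This alone only shows that \emph{some} extremal function has the form \eqref{eq:str-extr-fcn}; to show that \emph{every} extremal function does, I would also use the singular inner part $f=BSO$. If $f$ had a nontrivial outer factor, replacing the absolutely continuous part of $\mu$ by a singular measure with the same first $n$ moments and the same mass gives another extremal function. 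To squeeze out a strict inequality and exclude this, I would perturb: I would consider variations $\mu+t\nu$ with $\nu$ a real measure annihilating the constraints. The gradient condition of extremality (a Lagrange-multiplier relation) then says that a real trigonometric polynomial of degree $\le n$, namely $\re\sum_{k=0}^n \lambda_k \bar\z^{k}$ with $\lambda_k$ built from $\partial h/\partial a_j$, is maximized (or is zero) on the support of $\mu$. A non-zero such polynomial has at most $n$ maxima on $\T$, which forces $\mu$ to be supported on at most $n$ points.

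The main obstacle is the degenerate case where this trigonometric polynomial vanishes identically. In that case the first variation gives no information, and this is exactly where constants of modulus one can appear. I would handle it as follows. If all first variations vanish, then in particular varying the mass (equivalently $|a_0|$) yields nothing, and I would argue that either $\mu=0$, so that $f$ is a unimodular constant, or a second-order or open-mapping argument as in Lemma~\ref{lem:OpenMapping} (applied to $h$ near the point $(a_0,\ldots,a_n)$) gives an improving perturbation, contradicting extremality. Finally, if $h$ does not depend on $a_0$, a unimodular constant $f$ has coefficients $(\b,0,\ldots,0)$, so $\F(f)=h(\b,0,\ldots,0)=h(0,\ldots,0)$. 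Lemma~\ref{lem:OpenMapping} then gives $\re\F(F)>\re\F(f)$ for some $F\in\cb_\ast$, so constants are never extremal, and every extremal function has the form \eqref{eq:str-extr-fcn} with $1\le N\le n$.
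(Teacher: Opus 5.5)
\textbf{Verdict: there is a genuine gap in the structure part.} Your existence argument and your treatment of the case where $h$ does not depend on $a_0$ are correct and coincide with the paper's.

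\textbf{The degenerate case.} The gap sits exactly where you place the ``main obstacle'': when the first variation vanishes identically. Your Lagrange-multiplier analysis is sound in the non-degenerate case. In the degenerate case you only assert that ``a second-order or open-mapping argument'' yields an improving perturbation, and as stated this is not justified. An improving perturbation is guaranteed only when the coefficient data of $f$ is an \emph{interior} point of the set of realizable data, and you never identify when that happens. When it is a boundary point, no improving perturbation need exist, so the conclusion must come from elsewhere.

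\textbf{The missing idea: interior versus boundary of the moment region.} Fix $a_0=e^{b_0}$ and consider the Hermitian Toeplitz matrices built from $-2b_0$ and $-b_1,\ldots,-b_n$.
\begin{enumerate}[(i)]
\item If all of them are positive definite, then $(-b_1,\ldots,-b_n)$ lies in the open convex set $\rad_n$. By Carath\'eodory's theorem, every point of $\rad_n$ is realized by some function in $\cb_\ast$ with the same $a_0$. The open mapping theorem for the holomorphic function $H$ on $\rad_n$ then produces a strictly larger value of $\re\F$, whether or not the gradient vanishes, provided $H$ is not constant on $\rad_n$.
\item If some Toeplitz determinant vanishes, the Carath\'eodory--Toeplitz theorem directly gives that $-g$ is a sum of $N\le n$ atoms.
\end{enumerate}
This dichotomy settles every case at once and makes the first-variation analysis unnecessary.

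\textbf{The constant case is never handled.} You say it ``will be handled separately'' but do not do it. Showing that a constant extremal function has modulus one needs an argument. The paper applies the maximum principle to $\re h(z,0,\ldots,0)$ on $\overline{\D}$, which is non-constant by Lemma~\ref{lem:OpenMapping}. The same fact is what rules out $H$ being constant on $\rad_n$ in case (i). Note also that in your Herglotz picture, a constant $f$ with $|f|<1$ corresponds to a positive multiple of Lebesgue measure, not to $\mu=0$. So your remark ``either $\mu=0$'' does not cover it.

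\textbf{The extreme-point count is wrong.} With $2n+1$ real linear constraints, extreme points of the constrained set of positive measures are supported on at most $2n+1$ points, not $n$. Real trigonometric polynomials of degree $\le n$ form a Haar system, so any admissible measure with at most $2n+1$ atoms is extreme. You get at most $n$ atoms only when the Toeplitz matrix is singular, which is again the boundary case in (ii).
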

\begin{proof}
\textbf{Existence}. Denote by $a_k(f)$ the $k$-th Taylor coefficient of a function $f$ analytic in $\D$. In order to show that an extremal function for \eqref{eq:extr-alt} exists, let $(g_j)_j$ be a sequence of functions in $\cb_\ast$ such that $F_n = \lim_{j\to\infty} \re \Phi (g_j)$ (note that $F_n<\infty$ since $h$ is bounded on the compact set $\overline{\D}^{n+1}$). An application of Montel's theorem yields a subsequence, say $(f_j)_j$, that converges locally uniformly to some function $f$. By Weierstrass' theorem, $f$ is analytic in $\D$ and $\lim_{j\to\infty}  a_k(f_j) = a_k(f)$ for all $k\ge 0$. In view of the observations made in Section~\ref{sect:intro}, we have $|a_k(f_j)|\le 1$ for all $k\ge 0$ and all $j$, hence $|a_k(f)|\le 1$ for all $k\ge 0$. Moreover, since $0<|f_j|\le 1$ in $\D$ for all $j$, by Hurwitz's theorem either $f$ is identically zero or $f\in\cb_\ast$ (in which case it may still be a constant function). The assumption on continuity of $h$ in $\overline{\D}^{n+1}$ yields
\[
 \re h(a_0(f),a_1(f),\ldots,a_n(f)) = \lim_{j\to\infty} \re h(a_0(f_j),a_1(f_j),\ldots,a_n(f_j)) = F_n.
\]
Thus, $f$ will be extremal for \eqref{eq:extr-alt} if it is not identically zero. It is clear from  Lemma~\ref{lem:OpenMapping} that at least for the function $F$ found there $\re \F (F)$ yields a strictly greater value than the function identically equal to zero, hence the zero function cannot be extremal, and we are done.
\par\smallskip
\textbf{Structure}. First, we show that if $f$ is constant: $f\equiv a_0$, then $|a_0|=1$. By Lemma~\ref{lem:OpenMapping}, for the function $F$ obtained there we must have
\[
\re h(a_0,0,\ldots,0) = \re \Phi (f) \ge \re \Phi (F) > \re h(0, \ldots, 0).
\]
In particular, the function $g(z)=h(z,0,\ldots,0)$, holomorphic in $\D$ and continuous in $\overline{\D}$, is not constant. Since every constant function $w$ with $0<|w|\le 1$ belongs to $\cb_\ast$, we have $\re g(w)\le F_n=\re g(a_0)$ for $0<|w|\le 1$, and also for $w=0$ by continuity. Thus, $\re g$ attains its maximum over $\overline{\D}$ at the point $a_0$, and the maximum principle yields $|a_0|=1$, as claimed.

We now prove \eqref{eq:str-extr-fcn} in the case when $f$ is not constant. It suffices to prove the statement in the special case when $f(0)>0$. Otherwise we may consider
	\[
 f_0=\frac{\overline{a_0}}{|a_0|} f, \qquad
 h_0 (w_0,\ldots,w_n) =
 h\(\frac{a_0}{|a_0|} w_0,\ldots,\frac{a_0}{|a_0|}w_n\).
	\]
Note that multiplication by $\frac{\overline{a_0}}{|a_0|}$ is a bijective mapping of $\cb_\ast$ onto itself, hence $f_0 \in \cb_\ast$ if and only if $f\in \cb_\ast$. Also, $f_0(0)=|a_0|>0$ and $f_0$ is extremal for the functional corresponding to the function $h_0$ if and only if $f$ is extremal for the functional corresponding to the function $h$. Thus, it suffices to prove that $f_0$ (as an extremal function for $h_0$) has the structure \eqref{eq:str-extr-fcn} claimed, and it will immediately follow that $f$ also has the same structure.
\par
Since $f$ is not constant, we have $0<f(0)<1$, so we may write $f=e^g$, where $g(z)= \sum_{j=0}^\infty b_j z^j$ is analytic in $\D$, $\re g<0$ in $\D$, and $b_0=\log f(0)$ is real and negative. In what follows, we will consider a fixed value $b_0$ and hence also $a_0=e^{b_0}$ will be fixed. As we know from \cite{LS} or \cite{MSUV}, every coefficient of $f$ can be written as
$$
 a_k = a_0 P_k (b_1,\ldots,b_k),
$$
where $P_k$ is a polynomial (see also Lemma~\ref{lem:coeff-formula} below), hence
$$
 \F (f) = h(a_0,a_1,\ldots,a_n) = h(a_0,a_0 P_1(b_1), \ldots, a_0 P_n(b_1, \ldots, b_n))
$$
is again a holomorphic function of the variables $b_1$,\ldots,$b_n$, which can be written as another holomorphic function
$$
 \F (f) = H(\z_1,\ldots,\z_n)
$$
of the variables $\z_1=-b_1$, \ldots, $\z_n=-b_n$. Note that the function
$$
 -g(z) = -b_0 -b_1 z - \ldots - b_n z^n - \ldots
$$
is analytic in $\D$ and has positive real part there, so we can apply to it the theorems of Carath\'eodory and Toeplitz which can be found in \cite[Chapter~IV]{Ts}, among other sources.
\par
The set
$$
 \rad_n = \left\{ (w_1, \ldots, w_n)\in\C^n\,:\,
 \begin{vmatrix}
 -2 b_0 & w_1
\\
 \overline{w}_{1} & -2 b_0
\end{vmatrix} >0
, \ldots ,
 \begin{vmatrix}
 -2 b_0 & w_1 & \cdots & w_n
\\
 \overline{w}_1 & -2 b_0 & \cdots & w_{n-1}
\\
 \vdots & \vdots & \ddots & \vdots
\\
 \overline{w}_n & \overline{w}_{n-1} & \cdots & -2 b_0
\end{vmatrix}
  >0 \right\}
$$
is open, being a finite intersection of open sets. By Sylvester's criterion, $\rad_n$ is the set of vectors for which the corresponding Hermitian Toeplitz matrix is positive definite; hence $\rad_n$ is also convex, and it contains the zero vector $(w_1, \ldots, w_n)=(0, \ldots, 0)$. By the Carath\'eodory--Toeplitz theorem, the quadratic form
associated to the Hermitian Toeplitz matrix corresponding to the coefficient vector $(\z_1, \ldots, \z_n)$ of $-g$ is non-negative. Therefore, for $0<r<1$, the vectors $(r\z_1, \ldots, r\z_n)$ belong to $\rad_n$, since their Toeplitz matrices are obtained by adding the positive definite matrix $-2(1-r)b_0 I$ to $r$ times the former one. In particular, $(\z_1, \ldots, \z_n)$ belongs to the closure of $\rad_n$. If $H$ were constant in $\rad_n$, continuity would give $H(\z_1, \ldots, \z_n)=H(0,\ldots,0)$. Thus, the constant function $f\equiv a_0=e^{b_0}<1$ would also be extremal, which is impossible by the first part of the proof. Hence, $H$ is not constant in $\rad_n$.
\par
Suppose now, to reach a contradiction, that $(\z_1, \ldots, \z_n)\in \rad_n$. Since $\rad_n$ is open and connected and $H$ is not constant there, the open mapping theorem for holomorphic functions of several complex variables implies the existence of $(w_1, \ldots, w_n)\in \rad_n$ such that
$$
 \re H(\z_1, \ldots, \z_n) < \re H(w_1, \ldots, w_n).
$$
On the other hand, by a theorem of Carath\'eodory \cite[Theorem~IV.24]{Ts}, there exists a function $-g^\ast$, analytic in $\D$ and with positive real part, such that
$$
 -g^\ast(z) = -b_0 + w_1 z + \ldots + w_n z^n + \ldots
$$
The function $f^\ast= e^{g^\ast}$ belongs to $\cb_\ast$ and satisfies $f^\ast(0)=a_0$ and $\F(f^\ast)=H(w_1,\ldots,w_n)$, hence
$\re \F (f) < \re \F (f^\ast)$, which contradicts the assumption that $f$ is extremal.
\par
Therefore, $(\z_1,\ldots,\z_n)\notin\rad_n$. Since all the Toeplitz determinants associated with $-g$ are nonnegative and the first one equals $-2b_0>0$, there is a smallest integer $N$ with $1\le N\le n$ such that
$$
 \begin{vmatrix}
 -2 b_0 & -b_1 & \cdots & -b_N
\\
 -\overline{b}_1 & -2 b_0 & \cdots & -b_{N-1}
\\
 \vdots & \vdots & \ddots & \vdots
\\
 -\overline{b}_N & -\overline{b}_{N-1} & \cdots & -2 b_0
\end{vmatrix}
 = 0.
$$
The Carath\'eodory--Toeplitz theorem \cite[Theorem~IV.22]{Ts} implies that, for some $r_j>0$ and distinct points $\a_j$ with $|\a_j|=1$,
$$
 - g(z) = \sum_{j=1}^N r_j \frac{1+\a_j z}{1-\a_j z},
$$
which proves the structural formula \eqref{eq:str-extr-fcn} for $f$.
\par
Finally, suppose that $h$ does not depend on its first variable. If $f\equiv a_0$ is a constant function, then $a_1=\ldots=a_n=0$, hence $\Phi(f)=h(a_0,0,\ldots,0)=h(0,0,\ldots,0)$. Thus, for the function $F$ given by Lemma~\ref{lem:OpenMapping} we have
    \[
 F_n \ge \re \Phi (F) > \re h(0,\ldots,0) = \re \Phi (f),
    \]
hence no constant function can be extremal.
\end{proof}
\par
\subsection{Uniqueness and normalizations}
 \label{subsect:unique-normal}
It is important to notice that the structure of extremal functions is not quite the same in the most  general case as in the Krzy\.{z} problem. Also, for the sake of trying to obtain uniqueness, in some cases it may be convenient to consider our extremal problems over a smaller class instead of over the entire class $\cb_\ast$. For example, for the original Krzy\.{z} problem we can use rotations as above to see that the maximum $M_n$ defined in \eqref{eq:extr-ref} is actually equal to
    \[
 M_n = \sup \left\{ \re a_n\,\colon\, f\in\cb_\ast, \,  f(0)=a_0>0\right\}.
    \]
We will refer to this formulation as the \textit{normalized Krzy\.z problem\/}. Three basic questions arise here:
\par
(1) Can a constant function of modulus one actually be extremal for some choice of $\F$ in the general problem \eqref{eq:extr-alt}? A trivial example would be $\F (f)=a_0$, but it is desirable to have an example where $\F$ does not depend only on $a_0$.
\par
(2) How many extremal functions can we have for a given problem? Is it possible to have only one such function, or maybe some other small finite number?
\par
(3) Can we impose further normalizations as in the Krzy\.z problem, such as $f(0)>0$? How does this affect the extremal functions, and do we get uniqueness?
\par
In what follows, we answer all these questions by providing appropriate examples.
\par\smallskip
Our first example answers our question (1) raised above and, regarding question (2), shows that an extremal problem can have a unique solution.
\par
    \begin{ex} \label{ex:const-extremal}
The extremal problem associated to the linear functional $\Phi(f) = 2a_0 + a_n$ is maximized only by the constant function 1. Recalling  that for $f\in\cb_\ast$ we have $|a_n|\le 1 - |a_0|^2$ (see Section~\ref{sect:intro}), this can be seen as follows:
\[
 \re (2a_0 + a_n) \le 2 \re a_0 + (1 - |a_0|^2) \le 1 + 2 \re a_0 - (\re a_0)^2 \le 2,
\]
and the maximum value $2$ is only attained when $\re a_0 = 1$, which immediately forces $f$ to be constant: $f\equiv 1$. Thus, in this case there is only one extremal function and it is constant.
    \end{ex}
\par
Note that the functionals $\Phi (f)$ which are only functions of $a_0$, that is,
\[
 h(a_0,a_1,\ldots,a_n) = h(a_0,0,\ldots,0), \quad \forall (a_0,a_1,\ldots,a_n) \in \D^{n+1},
\]
give rise to some unusual situations. Here is an example.

    \begin{ex} \label{ex:2-extremals}
Let $\Phi (f)=a_0-a_0^2$.
		\begin{enumerate}[(a)]
			\item \label{ex:2-extremalsa}
Let us first maximize $\re \Phi (f)$ over $\cb_\ast$. The value  $a_0=f(0)$ must satisfy $0<|a_0|\le 1$. Writing $a_0=x+yi$ with $x$, $y\in\R$ and $0<x^2+y^2\le 1$, we see that
\[
 \re \Phi (f) = x - x^2 + y^2 \le 1 + x - 2 x^2 \le \frac98.
\]
The maximum is attained only when equality holds throughout; that is, only for
\[
 x=\frac14, \quad y = \pm \frac{\sqrt{15}}{4},
\]
and in this case $a_0$ clearly has modulus $1$, forcing $f\equiv a_0$. Thus, there are exactly two extremal functions and they are both constant.
\item \label{ex:2-extremalsb}
If we want to maximize $\re \Phi (f)$ over the reduced class of all $f\in\cb_\ast$ with $a_0>0$, we obtain
\[
  \re \Phi (f) = a_0-a_0^2 = a_0 (1-a_0) \le \frac14,
\]
with equality only for $a_0=\frac12$. However, in this case it is immediate that there are many extremal functions different from those described by Theorem~\ref{thm:structure}, e.g., every function of the form
\[
 f_\la (z) = \frac12\,(1+\la z), \quad |\la|\le 1.
\]
All these functions are obviously bounded by one, non-vanishing in $\D$, and have the property $f(0)=\frac12$. Even the constant function among them ($\la=0$) is clearly different from the ones from Theorem~\ref{thm:structure} and the ones found in the previous analysis.
\end{enumerate}
    \end{ex}
\par
Both Example~\ref{ex:const-extremal} and Example~\ref{ex:2-extremals}~\ref{ex:2-extremalsa} are in sharp contrast with the Krzy\.z problem, for which there are infinitely many extremal functions if we do not assume any normalization. Indeed, at least one extremal function $f$ for the Krzy\.z problem exists, and for each $\la$ with $|\la|=1$ the function $\overline{\la}^n f(\la z)$ has the same $n$-th coefficient as $f$; since its value at the origin is $\overline{\la}^n a_0\neq 0$, there are uncountably many distinct extremal functions.
\par
As our Example~\ref{ex:2-extremals}~\ref{ex:2-extremalsb} shows, the structure obtained in Theorem~\ref{thm:structure} can be completely ruined under the  additional assumption $f(0)>0$. One way of getting around this obstacle in order to be able to assume that there is an extremal function with a positive value at the origin is as follows. Define two functions $h$ and $\widetilde{h}$ that satisfy the conditions of Theorem~\ref{thm:structure} to be \textit{associated\/} if there exists $\la$ with $|\la|=1$ such that $\widetilde{h}(z_0,z_1,\ldots,z_n) = h(\la z_0,\la z_1,\ldots,\la z_n)$.
This clearly defines an equivalence relation, so instead of individual functionals we can consider equivalence classes of associated functionals. In each class we can choose one representative, $h$, such that there exists an extremal function $f$ for the functional $\Phi (f) = h(a_0,a_1,\ldots,a_n)$ with $a_0>0$, because we can find $\la$ with $|\la|=1$ such that $\la a_0 > 0$. This may provide a satisfactory solution (albeit not an ideal one).
\par
The next example shows that the problem encountered in our Example~\ref{ex:2-extremals} does not come from the fact that the functional depended only on $a_0$. There are other functionals that give rise to the same situation, even though the extremal function fits within the description given in Theorem~\ref{thm:structure}.
    \begin{ex} \label{ex:no-extremal-posit}
Let $n=1$ and $\Phi (f)=a_1-a_0^2$. We already know that there is an extremal function for this functional but there are none with the property $f(0)>0$. For a general $f\in\cb_\ast$ we have
    \[
 \re \F (f) = \re a_1 - \re a_0^2 \le 1 - |a_0|^2 - \re a_0^2 \le 1.
    \]
Equality in the Schwarz--Pick lemma for a non-constant function is only possible for a disc automorphism, which has a zero in $\D$, so this case must be ruled out. Thus, equality in the above chain can only hold for a constant function $f\equiv a_0$ with $|a_0|=1$ and $\re a_0=0$, that is, $f\equiv \pm i$. Hence, there are only two extremal functions, both constant functions of modulus one. However, under the assumption $a_0>0$ we get
    \[
 \re \F (f) = \re a_1 - a_0^2 \le \re a_1 \le \frac{2}{e},
    \]
as is well known. Thus, no extremal function has the property $f(0)>0$.
\par
In order to obtain an associated functional with an extremal function that has a positive value at the origin, instead of $\F$ we would have to consider the functional $\Phi_i (f) = i a_1 - (i a_0)^2 = a_0^2 + i a_1$, whose extremal functions are $f\equiv \pm 1$.
    \end{ex}
\par
\subsection{Further properties of extremal functions}
 \label{subsect:more-prop}
In what follows, we will refer to any function of the form \eqref{eq:str-extr-fcn} as a \textit{candidate function\/}. We will use the notation $\cc_N$ for the set of all candidate functions with exactly $N$ atoms, $1\le N \le n$, while $\cc_0$ will denote the class of constant functions of modulus one (no atoms in the representation in Theorem~\ref{thm:structure}). Thus, every extremal function for \eqref{eq:extr-alt} belongs to the set $\bigcup_{N=0}^n \cc_N$. The following fact was observed in \cite[Remark~3.2]{AM} for the Krzy\.z problem, and this easily generalizes to our situation. It shows that finding the maximum of \,$\re \Phi (f)$ over all admissible functions reduces to finding the supremum of the same quantity only over the functions with the full number of atoms.
    \begin{rem} \label{rem:est-atoms}
Let $n \geq 1$, $0\le N<n$, and let $h: \overline{\D}^{n+1} \to \C$ be a non-constant function, holomorphic in $\D^{n+1}$ and continuous in $\overline{\D}^{n+1}$. Then
$$
 \sup\,\{ \re \Phi (f)\,:\,f\in\cc_N\} \le  \sup\,\{ \re \Phi (f)\,:\,f\in\cc_{N+1}\}.
$$
In particular,
$$
 \sup\,\left\{ \re \Phi (f)\,:\,f\in \bigcup_{N=0}^{n-1}\cc_N\right\} \le \sup\,\{ \re \Phi (f)\,:\,f\in\cc_n\}.
$$
Hence,
$$
 F_n = \max \{ \re \Phi (f) \,\colon\,f\in\cb_\ast \} = \sup\,\{ \re \Phi (f)\,:\,f\in\cc_n\}.
$$
To see this, it suffices to show that if $\re \Phi (f) \le M$ for every candidate function with $N+1$ atoms, $0\le N<n$, then the same is true for every candidate function with $N$ atoms. Given $f\in\cc_N$, if we  choose $\a$ with $|\a|=1$ different from the atoms of $f$ (if $N=0$, any $\a$ will do). Then the function
$$
 g_r(z) = f(z)\, e^{r \frac{\a z + 1}{\a z - 1}}, \qquad r>0,
$$
belongs to $\cc_{N+1}$. It is elementary to see that $r \frac{\a z + 1}{\a z - 1}\to 0$ uniformly on compact sets, hence $e^{r \frac{\a z + 1}{\a z - 1}}\to 1$, also  uniformly on compact sets. By Weierstrass' theorem, it is immediate that the Taylor coefficients of $g_r$ converge to the Taylor coefficients of $f$ and therefore  also $\re \Phi (g_r) \to \re \Phi (f)$ as $r\to 0^+$. Thus, any uniform bound for $\re \Phi (f)$ over the class $\cc_{N+1}$ is also a uniform bound for the functions in $\cc_N$.
    \end{rem}
It has recently been proved by Lei and Zhang \cite{LZ} (indirectly, by proving the Agler--McCarthy conjecture \cite{AM}) that if every extremal function for the original Krzy\.z problem has exactly $n$ atoms, then the Krzy\.z conjecture is true. Note, however, that even for the Krzy\.z problem it has not yet been established that \textit{every\/} extremal function must have the full number of atoms: $N=n$, nor that there exists an extremal function with this number of atoms.

It is well-known to the experts that if there is a unique extremal functions for the Krzy\.z conjecture, then the conjecture holds. This fact admits a partial extension to our generalized problem by following  the same argument.
\par
\begin{thm} \label{thm:Uniqueness}
		Let $h : \overline{\D}^2 \to \C$ be a non-constant function holomorphic in $\D^2$ and continuous in $\overline{\D}^2$, and consider the functional $\Phi (f) = h(a_0, a_n)$. If the associated extremal problem \eqref{eq:extr-alt} has exactly one extremal function $f$ with $a_0>0$, then $f(z)=g(z^n)$ for some $g\in \cb_\ast$.
	\end{thm}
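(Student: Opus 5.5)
The plan is to use rotational symmetry together with the F.~Wiener transform. Write $\om=e^{2\pi i/n}$ and, for an integer $k$, set $f_k(z)=f(\om^k z)$. The function $f_k$ belongs to $\cb_\ast$. Its Taylor coefficients are $a_j\om^{jk}$, so its zeroth coefficient is $a_0>0$ and its $n$-th coefficient is $a_n\om^{nk}=a_n$. Hence $\Phi(f_k)=h(a_0,a_n)=\Phi(f)$, and $f_k$ is again an extremal function with positive value at the origin. Uniqueness gives $f_k=f$ for every $k$, that is, $f(\om z)=f(z)$ for all $z\in\D$.

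Comparing Taylor coefficients, this identity reads $a_j\om^j=a_j$ for all $j$, so $a_j=0$ unless $n$ divides $j$. Equivalently, $f=W_nf$, the F.~Wiener transform \eqref{eq:wiener}, and so $f(z)=\sum_{m\ge 0}a_{mn}z^{mn}=g(z^n)$ with $g(w)=\sum_m a_{mn}w^m$ analytic in $\D$.

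It remains to check that $g\in\cb_\ast$. Every $w\in\D$ has an $n$-th root $z\in\D$, and then $g(w)=f(z)$. Since $0<|f(z)|\le 1$, we get $0<|g(w)|\le 1$.

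There is no real obstacle in this argument. The only point needing care is that the rotation $z\mapsto \om^k z$ preserves both the normalization $a_0>0$ and the value of the functional. This holds precisely because $\Phi$ depends only on $a_0$ and $a_n$, and both coefficients are invariant under rotation by an $n$-th root of unity. It is also worth noting that the uniqueness assumption refers to extremal functions with $a_0>0$, and the rotated functions $f_k$ stay inside that class, so the hypothesis applies to them.
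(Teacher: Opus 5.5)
Your proposal is correct and follows the same route as the paper. Rotation by $n$-th roots of unity preserves $a_0>0$ and $a_n$, so uniqueness forces $f(\om z)=f(z)$; hence $f=W_nf=g(z^n)$, and your $n$-th-root argument for $g\in\cb_\ast$ just spells out a step the paper states in one line.
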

    \begin{proof}
		If $\omega$ is a primitive $n$-th root of unity, then $f_j (z) = f(\omega^j z)$ has the same coefficients $a_0$ and $a_n$ as $f$, hence it is also extremal for all $0 \leq j \leq n-1$. By uniqueness, $f = f_j$ for all $j$ and, in particular, $f = W_n f$ where $W_n f$ is the F.~Wiener transform of $f$ defined by \eqref{eq:wiener} in Section~\ref{sect:intro}. Thus, $f(z)=W_n f(z)=g(z^n)$, where $g$ is analytic in $\D$; since $f\in\cb_\ast$, also $g\in\cb_\ast$.
    \end{proof}
Note that, in spite of the restrictive condition on the coefficients, the above result covers the original Krzy\.z problem. Moreover, the same argument applies verbatim to extremal problems posed over any subclass of $\cb_\ast$ that is invariant under the rotations $f\mapsto f(\om z)$, such as the class $\cb_n$ considered in the next section.

\par\medskip
\section{A special class of problems}
 \label{sect:spec-probl}
\par
Although our Theorem~\ref{thm:structure} is valid for quite general functionals, we will now focus on a very specific subclass of functionals which will be useful for our purpose and to which Theorem~\ref{thm:Uniqueness} also applies. As a motivation, we first notice the following.
\par
By Theorem~\ref{thm:A}~\ref{thm:Ai} and \ref{thm:Aii}, every normalized extremal function $f$ for the Krzy\.z problem \eqref{eq:extr-ref} satisfies $a_n\ge 2 a_0>0$ and belongs to one of the classes $\cc_N$ of candidate functions, with $1\le N\le n$. Thus, we actually have
    \[
 M_n = \max \{ \re a_n \,:\, f\in \cb_\ast\} = \max \{ a_n \,:\, f\in \cb_n\},
    \]
where
\begin{equation}
 \cb_n = \left\{f\in \cb_\ast\,:\,f(z) = \sum_{j=0}^\infty a_j z^j \text{ in } \D,\, \ 0<2 a_0\le a_n, \ f\in \bigcup_{N=1}^n \cc_N\right\}.
 \label{eq:restr-class}
\end{equation}
Clearly, this class encodes all the information needed about possible extremal functions for the Krzy\.z problem. We note that $\cb_n$ contains an uncountable collection of functions with exactly $n$ atoms. Namely, let $r>0$ and define
    \[
 g_r (z) = - r \frac{1 - z^n}{1 + z^n} = - r (1 - 2 z^n + 2 z^{2n} - 2 z^{3n} + \ldots ), \qquad   f_r = e^{g_r}.
    \]
Then $f_r$ is a singular inner function with exactly $n$ atoms and
    \[
 a_0 = e^{- r} > 0, \quad a_n = 2 r e^{-r} > 0.
    \]
Thus, we have $a_n \ge 2 a_0$ whenever $r\ge 1$. Note that for $r=1$ we obtain the conjectured extremal function $f_n$.

One can also find examples of functions in $\cb_n$ with fewer than $n$ atoms when $n\ge 2$.

    \begin{ex} \label{ex:more-atoms}
Let $h_r(z) = \exp \(r \frac{z-1}{z+1}\)$, $r>0$, which is a singular inner function with one atom. We can express this function as
    \[
 h_r(z) = e^{-r} \sum_{j=0}^\infty (-1)^j L_j^{(-1)} (2r) z^j,
    \]
where $L^{(-1)}_j$ are the Laguerre polynomials of order $-1$; \textit{cf.\/} \cite[p.~258]{LS}. Thus, $h_r\in\cb_n$ if and only if $(-1)^n L^{(-1)}_n (2r) \ge 2$. This polynomial can be explicitly computed \cite[p.~775]{AS} as
\[
 L^{(-1)}_n (2r) = \sum_{j=0}^n (-1)^j \binom{n-1}{n-j} \frac{(2r)^j}{j!}.
\]
Since the leading coefficient of the polynomial $(-1)^n L^{(-1)}_n (2r)$ in $r$ is positive, this holds for all sufficiently large $r$.
\end{ex}
\smallskip

In the rest of this section we consider special functionals of the form \eqref{eq:spec-funct}, which are convex combinations of $2 a_0$ and $a_n$, as defined in Section~\ref{sect:intro}. We refer to such functionals as \textit{admissible functionals\/} and to the corresponding functions $p$ as \textit{admissible weights\/}. For $n\ge 1$ and for each admissible weight $p$, we consider the extremal problem of finding the supremum defined in \eqref{eq:extr-alt-p}.

Here are a few examples of such functionals:
    \[
 \F (f) = 2 a_0, \qquad \F (f) = 2 a_0^2 + (1-a_0) a_n, \qquad \F (f) = 2 a_0 a_n + (1-a_n) a_n
    \]
which correspond, respectively, to the weights
    \[
 p(x,y)=1, \qquad p(x,y)=x, \qquad p(x,y)=y.
    \]
Since $\F_p(f)=a_n-p(a_0,a_n)\,(a_n-2a_0)$, for all $f\in\cb_n$ we have
	\begin{equation}
  2 a_0 \le \F_p (f) \le a_n, \qquad \F_p (f) = a_n \Longleftrightarrow a_n = 2 a_0 .
    \label{eq:key-ineq}
	\end{equation}
By Theorem~\ref{thm:A}~\ref{thm:Aiii}, if a fixed normalized extremal function $f$ for the Krzy\.z problem \eqref{eq:extr-ref} satisfies $a_n=2a_0$, then $M_n=2/e$ and $f=f_n$. This is precisely what makes the functionals $\F_p$ interesting for our study, in an attempt to ``tighten'' as much as possible the inequality $2 a_0\le a_n$ and strive for equality.

The classical Krzy\.z problem corresponds to the weight $p\equiv 0$, which is not admissible, since the equivalence in \eqref{eq:key-ineq} fails for it. Nevertheless, it will be convenient to allow this limit case in some statements, and we will always say so explicitly. Note that for constant weights $p\equiv c$ with $c\in (0,1]$ we easily obtain
    \[
 (1 - c) M_n \le G_n(c) \le M_n,
    \]
so the classical extremal value $M_n$ remains the limit of these problems as $c\to 0^+$. Note, however, that this convergence does not itself establish equality for any $c\in (0,1]$.
\smallskip

Recall that Example~\ref{ex:const-extremal}, which concerns the functional $2a_0+a_n=2\F_{1/2}(f)$ corresponding to the weight $p\equiv 1/2$, shows that constant extremal functions are possible when the maximum is taken over the whole class $\cb_\ast$. Here we avoid this problem by imposing the structure from Theorem \ref{thm:structure} in view of our definition \eqref{eq:restr-class} of the restricted class $\cb_n$.
	\begin{prop} \label{prop:existence}
		Let $\F_p$ be as in \eqref{eq:spec-funct}. Then $G_n\ge 2/e$ and the supremum in \eqref{eq:extr-alt-p} is attained, that is, there exists an extremal function $f\in\cb_n$ (by definition, $f$ is a singular inner function with at most $n$ atoms).
	\end{prop}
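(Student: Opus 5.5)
We first show $G_n\ge 2/e$ by testing the conjectured function. By the computation before Example~\ref{ex:more-atoms} (with $r=1$), $f_n\in\cb_n$ with $a_0=1/e$ and $a_n=2/e$, so $a_n=2a_0$. Then $\F_p(f_n)=a_n-p(a_0,a_n)(a_n-2a_0)=2/e$, using the identity preceding \eqref{eq:key-ineq}. Hence $G_n\ge 2/e$. Note also that $G_n\le\sup_{\cb_n}a_n\le 1$ by \eqref{eq:key-ineq}, so $G_n$ is finite.

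For existence, take a maximizing sequence $(f_j)\subset\cb_n$ with $\F_p(f_j)\to G_n$. Since $a_0(f_j)>0$, each $f_j$ has the form \eqref{eq:str-extr-fcn} with $\b=1$. We pad with zero weights, so that
$$
 f_j(z)=\exp\Bigl(-\sum_{k=1}^n r_{k,j}\frac{1+\a_{k,j}z}{1-\a_{k,j}z}\Bigr), \qquad r_{k,j}\ge 0,
$$
and we set $R_j=\sum_k r_{k,j}$, so that $a_0(f_j)=e^{-R_j}$.

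The key step is to show that $(R_j)$ is bounded and stays away from $0$.

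\emph{Upper bound.} Write $f_j=e^{g_j}$. The coefficients $b_m$ of $g_j$ satisfy $|b_m|\le 2R_j$. Using $a_n=a_0P_n(b_1,\ldots,b_n)$ with the polynomial $P_n$ (cf.\ \cite{LS}, \cite{MSUV}), we get $|a_n(f_j)|\le C_n e^{-R_j}(1+R_j)^n$. If $R_j\to\infty$ along a subsequence, then $\F_p(f_j)\le a_n(f_j)\to 0$, which contradicts $G_n\ge 2/e$.

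\emph{Lower bound.} If $R_j\to 0$, then $g_j\to 0$ locally uniformly, so $a_n(f_j)\to 0$, and the same contradiction arises.

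Passing to a subsequence, we may therefore assume $\a_{k,j}\to\a_k\in\T$ and $r_{k,j}\to r_k\ge 0$, with $0<\sum_k r_k<\infty$. The exponents then converge locally uniformly on $\D$ to $-\sum_k r_k\frac{1+\a_k z}{1-\a_k z}$. Coinciding limit atoms are merged by adding their weights, and zero weights are discarded. The limit $f$ is therefore a singular inner function with between $1$ and $n$ atoms, and $f_j\to f$ locally uniformly.

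By Weierstrass' theorem the coefficients converge. The closed conditions persist in the limit: $a_n(f)\ge 2a_0(f)$, and $a_0(f)=e^{-\sum r_k}>0$. Hence $f\in\cb_n$.

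It remains to see that $\F_p(f)=G_n$. The point $(a_0(f),a_n(f))$ lies in $\D^2$, since $a_0(f)<1$ and $|a_n(f)|<1$ for the non-constant $f\in\cb_\ast$. The weight $p$ is holomorphic, hence continuous, there; this holds even if the limit satisfies $a_n=2a_0$, where $p$ is only known to lie in $[0,1]$. Therefore $\F_p(f_j)\to\F_p(f)$, so $f$ is extremal.

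I expect the main obstacle to be the degeneration control: ruling out $R_j\to\infty$, where the limit would be the zero function. This requires the coefficient growth estimate $|a_n|\le C_ne^{-R}(1+R)^n$, and I would derive it from the recursive formula of Lemma~\ref{lem:coeff-formula}.
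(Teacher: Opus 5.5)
Your proof is correct and follows the same compactness argument as the paper. You test $f_n$, bound the total mass of a maximizing sequence, extract convergent weights and atoms, and pass to the limit using the closedness of $a_n\ge 2a_0$ and the continuity of $p$ in $\D^2$. The differences are minor: the paper rules out mass blow-up with the pointwise bound $|f_j(z)|\le \exp\bigl(-R_j\frac{1-|z|}{1+|z|}\bigr)$ instead of your coefficient estimate, and it excludes vanishing mass after taking the limit (the constant $1$ violates $2a_0\le a_n$) rather than beforehand.
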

	\begin{proof}
	The function $f_n(z)= e^{\frac{z^n-1}{z^n+1}} = \frac{1}{e} + \frac{2}{e} z^n + \ldots$ belongs to $\cb_n$ and $\F_p(f_n)=2/e$, hence $G_n\ge 2/e$. Let $(g_j)_j$ be a sequence in $\cb_n$ such that $\F_p(g_j)\to G_n$. Since $g_j(0)>0$, each $g_j$ can be written as
\[
 g_j(z)=\exp\(-\sum_{i=1}^{N_j} r_{ij}\,\frac{1+\a_{ij}z}{1-\a_{ij}z}\), \qquad N_j\le n,\quad r_{ij}>0,\quad |\a_{ij}|=1.
\]
Let $s_j=\sum_{i=1}^{N_j} r_{ij}=-\log g_j(0)$. If $s_j\to\infty$ along a subsequence, then the estimate $|g_j(z)|\le \exp\(-s_j\frac{1-|z|}{1+|z|}\)$ shows that $g_j\to 0$ locally uniformly along that subsequence, so that $a_0(g_j)\to 0$, $a_n(g_j)\to 0$ and, by the continuity of $p$ at $(0,0)$, $\F_p(g_j)\to 0$, which contradicts $G_n\ge 2/e$. Hence, $(s_j)_j$ is bounded and, passing to a subsequence, we may assume that $N_j=N$ for all $j$, $r_{ij}\to r_i\ge 0$ and $\a_{ij}\to\a_i$ for $1\le i\le N$. Then $g_j\to f$ locally uniformly in $\D$, where
\[
 f(z)=\exp\(-\sum_{i=1}^{N} r_{i}\,\frac{1+\a_{i}z}{1-\a_{i}z}\)
\]
is either a singular inner function with at most $n$ atoms (some atoms may merge or disappear in the limit) or the constant function $1$. Since the Taylor coefficients converge, we have $0<2a_0(f)\le a_n(f)$; in particular, $f$ is not constant. Thus, $f\in\cb_n$ and, by the continuity of $p$ in $\D^2$, $\F_p(f)=\lim_{j\to\infty}\F_p(g_j)=G_n$.
	\end{proof}
\par
By Theorem~\ref{thm:A}~\ref{thm:Aiii}, proving the equality $a_n=2a_0$ for a fixed normalized extremal function would prove the statement of the Krzy\.z conjecture for that specific function. It is also known that uniqueness of the normalized extremal function is equivalent to the full statement of the conjecture. Our Theorem~\ref{thm:rel-probl}, formulated in the Introduction, collects the analogous relations between the problems \eqref{eq:extr-ref} and \eqref{eq:extr-alt-p}. Its proof is simple, but it shows that the two problems are closely related and gives us the flexibility to change from one functional to another when convenient. In particular, part (b) shows that the apparently weaker assumption $\sup \left\{ 2 a_0\,\colon\,f\in\cb_n \right\} = \sup \left\{ a_n\,\colon\,f\in\cb_n \right\}$ already implies that $M_n=2/e$. Thus, we believe the result may prove useful for the further study of the Krzy\.z problem.
\par\smallskip
    \begin{proof}[Proof of Theorem~\ref{thm:rel-probl}]
(a) Let $f(z)=\sum_{j=0}^\infty a_j z^j$ be an extremal function for \eqref{eq:extr-alt-p}; it exists by Proposition~\ref{prop:existence}. Since $f_n\in\cb_n$ and $\F_p(f_n)=2/e$, the inequalities \eqref{eq:key-ineq} and the inclusion $\cb_n\subset\cb_\ast$ yield
$$
 \frac2e\le G_n(p)=\F_p(f)\le a_n\le M_n=\frac2e .
$$
Hence, $G_n(p)=2/e$ and $\F_p(f)=a_n=M_n$. Thus, $f$ is also a normalized extremal function for the Krzy\.z problem \eqref{eq:extr-ref} and, by \eqref{eq:key-ineq}, $a_n=2a_0$. By Theorem~\ref{thm:A}~\ref{thm:Aiii}, $f=f_n$.
\par\smallskip
(b) By Proposition~\ref{prop:existence}, there is an extremal function $f$ for \eqref{eq:extr-alt-p}. For any such function $f(z)= \sum_{j=0}^\infty a_j z^j$, the inequalities \eqref{eq:key-ineq} and the inclusion $\cb_n\subset\cb_\ast$ yield
$$
 M_n = G_n(p) = \F_p (f) \le a_n \le M_n .
$$
Hence, $\F_p(f)=a_n=M_n$, so $f$ is also a normalized extremal function for the Krzy\.z problem \eqref{eq:extr-ref} and, by \eqref{eq:key-ineq}, $a_n=2a_0$. By Theorem~\ref{thm:A}~\ref{thm:Aiii}, $M_n=2/e$ and $f=f_n$. Since this holds for every extremal function for \eqref{eq:extr-alt-p}, $f_n$ is the only one. The particular case follows by taking $p\equiv 1$, since $G_n(1)=\sup\{2a_0\,\colon\,f\in\cb_n\}$ and $M_n=\sup\{a_n\,\colon\,f\in\cb_n\}$.
\par\smallskip
(c) Let $\om=e^{(2\pi i)/n}$. The class $\cb_n$ is invariant under the rotations $f\mapsto f(\om^j z)$, which do not change the coefficients $a_0$ and $a_n$. Hence, proceeding as in the proof of Theorem~\ref{thm:Uniqueness}, we see that the unique extremal function satisfies $f(z)=g(z^n)$ for some $g\in\cb_\ast$ with $g(0)=a_0>0$ and $g'(0)=a_n$. By the classical case $n=1$ of the Krzy\.z conjecture, we have $a_n=g'(0)\le 2/e$, with equality if and only if $g(z)=e^{\frac{z-1}{z+1}}$. Arguing like in (a), we see that $G_n(p)=2/e$, $g(z)=e^{\frac{z-1}{z+1}}$, and $f(z)=g(z^n)=e^{\frac{z^n-1}{z^n+1}}$.
    \end{proof}
\par
Since the Krzy\.z conjecture has been proved for $n\le 5$ and a computer-assisted proof for $n=6$ is currently under scrutiny (see Section~\ref{sect:intro}), Theorem~\ref{thm:rel-probl}~\ref{thm:rel-probl:a} yields the following.
	\begin{cor} \label{cor:small-n}
	Let $1\le n\le 5$ and let $\F_p$ be an admissible functional. Then $G_n(p)=2/e$ and $f_n$ given by \eqref{eq:extr-fcn} is the only extremal function for \eqref{eq:extr-alt-p}.
	\end{cor}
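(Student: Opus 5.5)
The corollary follows directly from Theorem~\ref{thm:rel-probl}~\ref{thm:rel-probl:a}. The plan is to check its single hypothesis, namely that $f_n$ is extremal for the Krzy\.z problem \eqref{eq:extr-ref} (equivalently, $M_n=2/e$), for each $n$ with $1\le n\le 5$. Then part \ref{thm:rel-probl:a} yields at once that $G_n(p)=2/e$ and that $f_n$ is the only extremal function for \eqref{eq:extr-alt-p}, for every admissible weight $p$.

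The first step is to invoke the known solutions of the Krzy\.z conjecture for small $n$ cited in the Introduction: \cite{Kr} for $n=1,2$, \cite{HSZ} and \cite{Br2} for $n=3$, \cite{Ta} for $n=4$, and \cite{Sz} and \cite{Sa} for $n=5$. Each of these establishes $M_n=2/e$, attained by $f_n$ after normalization. The case $n=1$ is also the classical one already used in the proof of Theorem~\ref{thm:rel-probl}~\ref{thm:rel-probl:c}.

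The second step is to apply part \ref{thm:rel-probl:a}. Its proof only uses three ingredients: the existence of an extremal function for \eqref{eq:extr-alt-p} (Proposition~\ref{prop:existence}), the inequalities \eqref{eq:key-ineq}, and Theorem~\ref{thm:A}~\ref{thm:Aiii}. No assumption on $n$ beyond $M_n=2/e$ enters, so the conclusion transfers verbatim.

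There is no real obstacle. The only point needing care is that uniqueness in \eqref{eq:extr-alt-p} must not rely on uniqueness in the Krzy\.z problem. Part \ref{thm:rel-probl:a} handles this: any extremal $f$ for \eqref{eq:extr-alt-p} satisfies $\F_p(f)=a_n=M_n$, hence $a_n=2a_0$ by \eqref{eq:key-ineq}, hence $f=f_n$ by Theorem~\ref{thm:A}~\ref{thm:Aiii}. This step depends on $p$ being admissible, since for $p\equiv 0$ the equivalence in \eqref{eq:key-ineq} fails. The case $n=6$ is deliberately excluded, since its computer-assisted proof \cite{W} is still under scrutiny.
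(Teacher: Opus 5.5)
Your proposal is correct and is exactly the paper's argument: the corollary combines the known validity of the Krzy\.z conjecture for $n\le 5$ with Theorem~\ref{thm:rel-probl}~\ref{thm:rel-probl:a}, whose proof uses nothing about $n$ beyond $M_n=2/e$. The paper cites the small-$n$ results collectively, so your matching of individual references to individual values of $n$ is not needed for the argument and is not taken from the paper.
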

\par\smallskip
We hope that Theorem~\ref{thm:rel-probl} may provide some new directions in the research on the Krzy\.z problem. The intermediate values of $p$ can be seen as a ``bridge'' between the original Krzy\.z conjecture and the problem of maximizing the constant term within the class $\cb_n$ and may also deserve a closer look.
    \begin{rem} \label{rem:coeff-comp}
	Comparing the extremal functions for the general problem and the original Krzy\.z problem allows us to deduce some conclusions about the constant term of an extremal function. For instance, if $G_n(p)=2/e$ for some admissible weight $p$, then every normalized extremal function $g(z) = \sum_{j=0}^{\infty} b_j z^j$ for the Krzy\.z problem \eqref{eq:extr-ref} satisfies $b_0 \leq 1/e$. Indeed, $g\in\cb_n$ by Theorem~\ref{thm:A}, and \eqref{eq:key-ineq} yields $2 b_0 \le \F_p (g) \le G_n (p) = 2/e$.
    \end{rem}
    \begin{rem} \label{rem:total-mass}
	The case $p\equiv 1$ admits a natural reformulation in terms of the singular measure associated with $f$. If $f\in\cb_n$ is written as
	\[
	 f(z)=\exp\(-\sum_{j=1}^N r_j\,\frac{1+\a_jz}{1-\a_jz}\),
	\]
	then $a_0=e^{-s}$, where $s=\sum_{j=1}^N r_j$ is the total mass of this measure. Hence, $G_n(1)=2e^{-s_\ast}$, where $s_\ast$ is the minimum of the total mass over all $f\in\cb_n$; it is attained by Proposition~\ref{prop:existence}. Since $f_n$ has total mass $1$, the conjecture formulated in Subsection~\ref{subsect:equiv-form} for $p\equiv 1$ states that \emph{every singular inner function $f$ with at most $n$ atoms, $f(0)>0$ and $a_n\ge 2a_0$ has total mass at least $1$, with equality only for $f=f_n$.} Moreover, by Theorem~\ref{thm:rel-probl}~\ref{thm:rel-probl:b}, the equality $2e^{-s_\ast}=M_n$ would imply that $M_n=2/e$.
    \end{rem}

    In the same spirit as above, we can also prove the following result. Observe that the bound in (c) is only of interest when $p>1-2/e$, since otherwise the maximum on the right-hand side is zero.
	\begin{prop} \label{prop:compare}
		Let $\F_p$ be an admissible functional and let $f(z)=\sum_{j=0}^{\infty} a_j z^j$ be an arbitrary but fixed extremal function for \eqref{eq:extr-alt-p}. Let $g(z)=\sum_{j=0}^{\infty} b_j z^j$ be an arbitrary but fixed normalized extremal function for the Krzy\.{z} problem \eqref{eq:extr-ref}. Then the following statements hold.

	\begin{enumerate}[(a)]
\item \label{prop:compare:a}
	If $p(a_0, a_n) \geq p(b_0, b_n)$ then $b_0 \leq a_0$.
\item \label{prop:compare:b}
    If $p$ is a positive constant, then $b_0 \leq a_0$.
\item \label{prop:compare:c}
    If $p$ is a constant with $0<p<1$, then
    \[
 a_0 \geq \max \left\{ \frac{p-\sqrt{p^2-(1-p)\(\frac2e-1+p\)}}{1-p},\,0\right\},
    \]
    and if $p\equiv 1$, then $a_0\ge 1/e$.
    \end{enumerate}
	\end{prop}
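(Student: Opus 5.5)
The plan is to compare the value of $\F_p$ at the two extremal functions, using that $g$ is admissible for \eqref{eq:extr-alt-p} and that $f$ is admissible for \eqref{eq:extr-ref}. By Theorem~\ref{thm:A}~\ref{thm:Ai} and \ref{thm:Aii}, $g\in\cb_n$. Hence extremality of $f$ gives $\F_p(f)=G_n(p)\ge \F_p(g)$. Since $f\in\cb_n\subset\cb_\ast$, we also have $a_n\le M_n=b_n$.

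For (a), write $p_a=p(a_0,a_n)$ and $p_b=p(b_0,b_n)$. Then
$$
2p_a a_0+(1-p_a)a_n \ge 2p_b b_0+(1-p_b)b_n \ge 2p_b b_0+(1-p_b)a_n ,
$$
where the second step uses $1-p_b\ge 0$ and $b_n\ge a_n$. Rearranging,
$$
2p_b b_0 \le 2p_a a_0+(p_b-p_a)a_n \le 2p_a a_0+(p_b-p_a)\,2a_0 = 2p_b a_0 ,
$$
because $p_b-p_a\le 0$ by hypothesis and $a_n\ge 2a_0$.

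If $b_n>2b_0$, admissibility gives $p_b>0$, and dividing yields $b_0\le a_0$. The only delicate point is the boundary case $b_n=2b_0$, where $p_b$ might vanish. In that case Theorem~\ref{thm:A}~\ref{thm:Aiii} gives $M_n=2/e$ and $g=f_n$. Theorem~\ref{thm:rel-probl}~\ref{thm:rel-probl:a} then forces $f=f_n$, so $a_0=b_0=1/e$ and the inequality holds trivially.

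Part (b) is the special case of (a) with $p_a=p_b$. Here positivity of the constant makes the division immediate, with no boundary case to treat.

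For (c), I would use $G_n(p)\ge 2/e$ from Proposition~\ref{prop:existence} together with the Schwarz--Pick bound $a_n\le 1-a_0^2$ from Section~\ref{sect:intro}. With $p$ constant this gives
$$
\tfrac2e\le 2pa_0+(1-p)(1-a_0^2),
$$
that is, $(1-p)a_0^2-2pa_0+\(\tfrac2e-1+p\)\le 0$. For $0<p<1$ this quadratic in $a_0$ has positive leading coefficient and is nonpositive at $a_0$. Therefore its discriminant is nonnegative, and $a_0$ is at least the smaller root $\bigl(p-\sqrt{p^2-(1-p)(\frac2e-1+p)}\bigr)/(1-p)$. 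Combining this with $a_0>0$ gives the stated maximum. For $p\equiv1$ the inequality reads directly $2a_0=G_n(1)\ge 2/e$. No step here seems a real obstacle; the only care needed is the boundary case $p_b=0$ in (a).
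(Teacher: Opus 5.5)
Your proof is correct and follows the paper's argument. You compare $\F_p(f)\ge\F_p(g)$ using $g\in\cb_n$, $b_n=M_n\ge a_n$ and $a_n\ge 2a_0$, and in (c) you combine $G_n(p)\ge 2/e$ with $a_n\le 1-a_0^2$.

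The differences are minor. In (a) you divide by $p(b_0,b_n)$ and settle the degenerate case $b_n=2b_0$ via Theorem~\ref{thm:A}~\ref{thm:Aiii} and Theorem~\ref{thm:rel-probl}~\ref{thm:rel-probl:a}. The paper instead divides by $p(a_0,a_n)$ and handles $p(a_0,a_n)=0$ using only admissibility: it gets $a_n=2a_0$, so $2b_0\le b_n\le a_n=2a_0$. In (c), your remark that $Q(a_0)\le 0$ already forces real roots spares the paper's explicit discriminant computation.
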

\begin{proof}
	For part (a), we see that
	\[
 p(a_0, a_n) (2a_0-a_n)+a_n \geq p(b_0,b_n)(2b_0-b_n)+b_n \geq p(a_0,a_n)(2b_0-b_n)+b_n.
    \]
    Consequently, it follows that
	\[
 2p(a_0,a_n)(a_0-b_0) \geq (1-p(a_0,a_n))(b_n-a_n) \geq 0,
    \]
    which proves that $a_0 \geq b_0$ if $p(a_0,a_n)>0$. If $p(a_0,a_n)=0$, then $p(b_0,b_n)=0$ by hypothesis, so the first chain of inequalities gives $a_n\ge b_n$; moreover, $a_n=2a_0$, since $p(x,y)>0$ whenever $0<2x<y<1$. Hence, $2b_0\le b_n\le a_n=2a_0$.
\par
	Part (b) follows automatically from part (a) since if $p$ is a constant, then $p(a_0, a_n) = p(b_0, b_n)$.
\par
	Finally, for part (c), assume that $p$ is a constant. Since $f_n\in\cb_n$, we have
	\[2a_0p+(1-p)a_n = G_n \geq \F_p(f_n) = 2/e.\]
	If $p=1$, this is $a_0\ge 1/e$. If $0<p<1$, we use the estimate $a_n\le 1-a_0^2$ from Section~\ref{sect:intro} to obtain $Q(a_0)\le 0$, where
	\[
	 Q(x)=(1-p)x^2-2px+\(\frac2e-1+p\).
	\]
	The discriminant of $Q$ is $4\(p^2-(1-p)\(\frac2e-1+p\)\)=4\(2p^2-2\(1-\frac1e\)p+1-\frac2e\)$, which is positive since $4\(1-\frac1e\)^2-8\(1-\frac2e\)=4\(\frac2e+\frac1{e^2}-1\)<0$. Hence, $a_0$ is not smaller than the smaller root of $Q$, which is the first expression in the maximum, and the claim follows since $a_0>0$.
\end{proof}
\begin{rem} \label{rem:compare}
Using the crude estimate $a_n<1$ instead of $a_n\le 1-a_0^2$ in the proof of part (c), one obtains the simpler bound $a_0\ge \frac1p\(\frac1e-\frac{1-p}2\)$, which is positive exactly when $p>1-2/e$.
\end{rem}

\par\medskip
\section{Problems with further assumptions on coefficients}
 \label{sect:extra-assumpt}
\par
We continue to study the same type of problems as in the previous section, this time under some additional conditions on the coefficients, in the vein of \cite{MSUV}. But first we need to review a number of technical tools that will be needed.
\subsection{Some estimates for the Carath\'eodory class}
 \label{subsect:carath}
Throughout this section, we will denote by $\cp$ the \textit{Carath\'eodory class\/} of normalized analytic functions $q$ in $\D$ with positive real part:
    \[
        q(z)=\sum_{j=0}^{\infty} q_j z^j, \quad q_0=q(0)=1.
    \]
It is customary to denote the functions in $\cp$ by $p$ and their Taylor coefficients by $p_k$ but here we use the letter $q$ to avoid confusion with the admissible weights $p$ of Section~\ref{sect:spec-probl}. It is well known that every function $q\in\cp$ has the form
\[
    q(z) = \int_0^{2\pi} \frac{1 + e^{- i t}z}{1 - e^{- i t} z}\, d\mu (t),
\]
where $\mu$ is a Borel probability measure on $[0,2\pi)$ (which can also be viewed as the unit circle with an appropriate identification). This follows directly from \cite[Section~1.9]{D} upon imposing the normalization condition $q(0)=1$. The above representation is known as the \textit{Herglotz representation\/} of $q$ and $\mu$ is referred to as its \textit{Herglotz measure\/}. Among the Herglotz measures, the ones of special interest to us are the ones supported at $N$ point masses on the unit circle $\{\a_j = e^{- i t_{j}}\colon 1\le j\le N\}$, where $0\le t_{j} < 2 \pi$, $t_j\neq t_k$ for $j\neq k$. In this case,
    \[
 q(z) = \sum_{j=1}^{N} r_{j}\,\frac{1+\a_{j}z}{1-\a_{j}z}, \qquad \sum_{j=1}^{N} r_{j} = 1.
    \]

For arbitrary $q\in\cp$, by Carath\'eodory's lemma \cite[p.~41]{D}, we know that $|q_n| \leq 2$ for all $n\ge 1$, with equality only if the support of the measure is a subset of the $n$-th roots of some unimodular complex number. Livingston \cite{L} further showed that $|q_n-q_{k}q_{n-k}| \leq 2$ for all $0 \leq k \leq n$, and this was later generalized in several directions by Efraimidis \cite{Ef}.
\par\smallskip
	Following \cite{Ef}, for a complex number $w$ and a function $q$ in $\cp$ we consider the $(k+1) \times (k+1)$ determinants
	
	\begin{equation*}
		A_{k,m}(w)= \left|
		\begin{array}{cccccc}
			q_{m+k} & q_{m+k-1} & q_{m+k-2} &  \ldots & q_{m+1} & q_m   \\
			wq_1     & 1               & 0               &  \ldots & 0            & 0       \\
			wq_2     & wq_1         & 1               &  \ldots & 0            & 0       \\
			\vdots    & \vdots        &   \vdots      & \ddots & \vdots     & \vdots\\
			wq_{k-1}& wq_{k-2}   & wq_{k-3}   &  \ldots & 1            &   0      \\
			wq_k     & wq_{k-1}   & wq_{k-2}    &  \ldots & wq_1      &   1
		\end{array} \right|.
	\end{equation*}

\smallskip\noindent
	The following was proved by Efraimidis in \cite[Theorem 2]{Ef}.
\par
	\begin{lem}\label{lem:Efraimidis}
		For $q$ in $\cp$ and $w$ in $\C$ we have
		\[|A_{k,m}(w)| \leq 2 \max\{1, |1-2w|^k\},\]
		for all integers $k \geq 0$ and $m \geq 1$.
\par
    If $|1 - 2w| < 1$, then equality holds if and only if the support of the Herglotz measure of $q$ is contained in a rotation $\{\z: \z^{m+k}=\la\}$, $|\la|=1$, of the set of $(m+k)$-th roots of unity and $q_1 = q_2 = \ldots = q_k = 0$.
	\end{lem}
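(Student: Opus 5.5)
The plan is to turn $A_{k,m}(w)$ into an integral against the Herglotz measure $\mu$ of $q$. The $k$ free parameters should become a bound on a single polynomial of degree $k$ on the unit circle.

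\textbf{Step 1 (algebraic identity).} Put $u(z)=1+w\,(q(z)-1)=1+w\sum_{j\ge1}q_jz^j$ and write $1/u(z)=\sum_{j\ge0}c_jz^j$, so that $c_0=1$. The $k\times k$ minor of $A_{k,m}(w)$ obtained by deleting the first row and the first column is lower unitriangular, and it is the Toeplitz matrix of $u$. Expanding along the first row, I expect the identity
\[
 A_{k,m}(w)=\sum_{j=0}^k q_{m+j}\,c_{k-j}=[z^{m+k}]\,\frac{\sum_{j\ge m}q_jz^j}{u(z)} .
\]
The cases $k=0$ and $k=1$ already agree with this, since $q_{m+1}-wq_1q_m$ is the value for $k=1$. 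For general $k$ I would check it by induction on $k$, or by Cramer's rule, using that the inverse of a lower triangular Toeplitz matrix is again Toeplitz, with symbol $1/u$. The $c_j$ depend only on $w$ and on $q_1,\dots,q_k$.

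\textbf{Step 2 (reduction to a polynomial bound).} Insert $q_{m+j}=2\int\z^{m+j}\,d\mu(\z)$, where $\z=e^{-it}$ runs over the support of $\mu$. This gives
\[
 A_{k,m}(w)=2\int \z^{m+k}\,S_k(\overline{\z})\,d\mu(\z), \qquad S_k(z)=\sum_{j=0}^k c_jz^j .
\]
Since $\mu$ is a probability measure, $|A_{k,m}(w)|\le 2\max_{\z\in\operatorname{supp}\mu}|S_k(\overline{\z})|$, and this no longer depends on $m$.

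To study $S_k$, write $q=(1+\om)/(1-\om)$ with $\om$ a Schwarz function and set $v=1-2w$. Then
\[
 \frac1u=\frac{1-\om}{1-v\om}=1-(1-v)\sum_{j\ge1}v^{j-1}\om^j ,
\]
and $S_k$ is the degree-$k$ Taylor truncation of this function. The target is
\[
 |S_k(\z)|\le\max\{1,|v|^k\}\quad\text{for } |\z|=1.
\]

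\textbf{Step 3 (main obstacle).} The difficulty is that Taylor truncations of bounded functions are not bounded in general. So this bound must come from the specific structure of $1/u$, or from the coupling between $S_k$ and $\mu$; that coupling is why only the support of $\mu$ matters in Step 2. My first attempt would avoid the sup-norm altogether and use the Carath\'eodory--Toeplitz positivity directly. The functional $\sum_j q_{m+j}c_{k-j}$ can be realised as a coefficient of $\om^{\,\ast}$-type products, and I would apply the Schur algorithm for $\om$ to peel off $q_1,\dots,q_k$ one at a time. Each step should change the relevant quantity by a factor that is either a Schur contraction or a factor $v$. This would produce the bound $\max\{1,|v|\}^k$ inductively, in the spirit of Livingston's proof of $|q_n-q_kq_{n-k}|\le2$, which is the case $w=1$, $v=-1$.

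\textbf{Step 4 (equality case).} Assume $|v|<1$. Equality in the inductive bound should force every Schur parameter used in Step 3 to vanish, which means $q_1=\dots=q_k=0$. In that case $c_j=0$ for $1\le j\le k$, so $S_k\equiv1$ and $A_{k,m}(w)=q_{m+k}$. By Carath\'eodory's lemma, $|q_{m+k}|=2$ holds exactly when $\operatorname{supp}\mu$ lies in a set $\{\z:\z^{m+k}=\la\}$ with $|\la|=1$. Conversely, if both conditions hold, then $A_{k,m}(w)=q_{m+k}$ has modulus $2$, so equality is attained.
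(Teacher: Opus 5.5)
The paper does not prove this lemma. It quotes it from Efraimidis \cite[Theorem~2]{Ef}, so your argument has to stand on its own, and as written it does not prove the inequality, which is the whole content of the lemma.

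Your Step~1 identity $A_{k,m}(w)=\sum_{j=0}^k q_{m+j}c_{k-j}$ is correct; it is the same Schur-complement computation the paper carries out in Lemma~\ref{lem:det1}. The representation $A_{k,m}(w)=2\int\zeta^{m+k}S_k(\overline{\zeta})\,d\mu(\zeta)$ is also correct. The reduction to $\max_{\operatorname{supp}\mu}|S_k(\overline{\zeta})|$, however, is a dead end, because the bound it requires is false.

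Take $k=1$, $w=\frac12$ (so $v=0$) and $\mu=(1-\varepsilon)\delta_1+\varepsilon\delta_{-1}$ with $0<\varepsilon<\frac12$. Then:
\begin{itemize}
\item $q_1=2(1-2\varepsilon)$ and $S_1(\overline{\zeta})=1-(1-2\varepsilon)\overline{\zeta}$;
\item at the atom $\zeta=-1$ this gives $|S_1|=2-2\varepsilon>1$;
\item yet $A_{1,1}(\tfrac12)=q_2-\tfrac12 q_1^2=8\varepsilon(1-\varepsilon)\le 2$.
\end{itemize}
Over the whole circle the target already fails for $k=1$, $w=\frac12$ whenever $q_1\neq 0$.

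Step~3 then only describes a hope. No quantity is defined on which a Schur step acts as ``a contraction or a factor $v$'', and the words ``should'' and ``would'' carry the entire inequality. Consequently the ``only if'' half of Step~4 (equality forces $q_1=\dots=q_k=0$) has nothing to rest on. Only the converse half, and the computation $A_{k,m}(w)=q_{m+k}$ when $q_1=\dots=q_k=0$, is actually proved.

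A way to repair it is to keep Steps~1--2 but replace the supremum by Cauchy--Schwarz: $|A_{k,m}(w)|\le 2\,\|S_k(\overline{\zeta})\|_{L^2(\mu)}$. Then compute this norm exactly:
\begin{itemize}
\item Let $W$ be the $(k+1)\times(k+1)$ lower-triangular Toeplitz matrix of $\omega$. It is strictly lower triangular since $\omega(0)=0$, and the corresponding matrix of $q$ is $Q=(I-W)^{-1}(I+W)$.
\item The moment matrix $\bigl(\int\zeta^{i-j}\,d\mu\bigr)_{i,j}$ equals $\frac12(Q+Q^*)=(I-W^*)^{-1}(I-W^*W)(I-W)^{-1}$.
\item The coefficient vector of $S_k$ is $(I-W)(I-vW)^{-1}e_0$.
\item Hence $\|S_k(\overline{\zeta})\|^2_{L^2(\mu)}=\|y\|^2-\|Wy\|^2$ with $y=(I-vW)^{-1}e_0$.
\item Since $y=e_0+vWy$ and $Wy\perp e_0$, this equals $1-(1-|v|^2)\|Wy\|^2$.
\end{itemize}
This gives $|A_{k,m}(w)|\le 2$ for $|v|\le 1$. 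If $|v|<1$ and equality holds, then $Wy=0$, so $y=e_0$ and $We_0=(0,\omega_1,\dots,\omega_k)^T=0$. That is, $q_1=\dots=q_k=0$, and your Step~4 finishes the equality case.

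For $|v|>1$ no Schur algorithm is needed. $A_{k,m}$ is a polynomial of degree at most $k$ in $v$ and is bounded by $2$ on $|v|=1$. The maximum principle applied to $v^{-k}A_{k,m}$ on $\{|v|>1\}\cup\{\infty\}$ then gives $|A_{k,m}(w)|\le 2|v|^k$.
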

    In what follows, we will use Efraimidis' estimates to prove Theorem~\ref{thm:Gn}. We will also need the classical Schur complement formula for determinants; see \cite[p.~217]{Sch}.
	\begin{lem} \label{lem:Schur}
		Let $M$ be a square matrix of complex numbers written in block form
		\[
			M = \begin{pmatrix}
				A & B \\
				C & D
			\end{pmatrix},
		\]
		where $A$ and $D$ are square matrices of sizes $m\times m$ and $k\times k$, respectively.
		If $D$ is invertible, then
		\[
			\det M = \det D \cdot \det(A - B D^{-1} C).
		\]
	\end{lem}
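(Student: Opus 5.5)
This is the classical Schur complement formula, and I would prove it by a block factorization of $M$. Since $D$ is invertible, the first step is to verify by direct block multiplication the identity
\[
 \begin{pmatrix} A & B \\ C & D \end{pmatrix}
 =
 \begin{pmatrix} I_m & B D^{-1} \\ 0 & I_k \end{pmatrix}
 \begin{pmatrix} A - B D^{-1} C & 0 \\ 0 & D \end{pmatrix}
 \begin{pmatrix} I_m & 0 \\ D^{-1} C & I_k \end{pmatrix}.
\]
To check it, multiply the last two factors, which gives $\begin{pmatrix} A - BD^{-1}C & 0 \\ C & D\end{pmatrix}$. Then left-multiply by the first factor. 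The upper-left block becomes $A - BD^{-1}C + BD^{-1}C = A$, the upper-right block becomes $BD^{-1}D = B$, and the bottom row is unchanged.

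Next I take determinants of both sides and use multiplicativity of the determinant. The outer two factors are block triangular with identity diagonal blocks, hence are unipotent triangular matrices, so each has determinant $1$. The middle factor is block diagonal, so its determinant is $\det(A-BD^{-1}C)\cdot\det D$. The formula $\det M = \det D\cdot\det(A-BD^{-1}C)$ follows.

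The only point that needs justification is the determinant of block triangular and block diagonal matrices. I would handle it either with the Laplace expansion along the rows of the identity block, or with the observation that a block unitriangular matrix is literally upper (resp. lower) triangular with ones on the diagonal. For the block diagonal factor, the determinant equals the product of the determinants of the diagonal blocks, again by Laplace expansion. There is no real obstacle here; the factorization itself is the heart of the argument, and in the paper one could simply cite \cite[p.~217]{Sch} as done.
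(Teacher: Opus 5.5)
Your proof is correct and is essentially the paper's argument, namely block elimination using $D^{-1}$. The paper uses only the one-sided identity $M\begin{pmatrix} I_m & 0\\ -D^{-1}C & I_k\end{pmatrix}=\begin{pmatrix} A-BD^{-1}C & B\\ 0 & D\end{pmatrix}$ and ends with a block triangular matrix, whereas your two-sided factorization ends with a block diagonal one; the difference is cosmetic.
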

	\begin{proof}
		The identity follows by taking determinants in the factorization
			\[
				M \begin{pmatrix}
					I_m & 0 \\
					-D^{-1}C & I_k
				\end{pmatrix} = \begin{pmatrix}
					A - BD^{-1}C & B \\
					0 & D
				\end{pmatrix}.
			\]
	\end{proof}

	\begin{lem}\label{lem:det1}
		Fix an integer $n \geq 6$ and let $l=\lfloor n/3 \rfloor$.
		Let $q$ be a function in $\cp$, and let $w$ be any complex number. Assume that $q_j=0$ for all $1 \leq j \leq l$. Then
		\begin{equation}\label{eq:A1}
			\left|q_n-w\sum_{j=l+1}^{n-1-l}q_{n-j}q_{j} \right| \leq 2 \max\{1, |1-2w|^{n-l-1}\}.
		\end{equation}
\par
		If $|1 - 2w| < 1$, then equality holds if and only if the support of the Herglotz measure of $q$ is contained in a rotation $\{\z: \z^{n}=\la\}$, $|\la|=1$, of the set of $n$-th roots of unity and $q_1 = q_2 = \ldots = q_{n-1-l} = 0$.
	\end{lem}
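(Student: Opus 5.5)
The plan is to recognize the left-hand side of \eqref{eq:A1} as one of Efraimidis' determinants and then apply Lemma~\ref{lem:Efraimidis} directly. I take
\[
 k=n-l-1, \qquad m=l+1,
\]
so that $m+k=n$, $m\ge 1$ and $k\ge 0$. With these choices, the first row of $A_{k,m}(w)$ is $(q_n,q_{n-1},\ldots,q_{l+1})$. Once I have shown that
\[
 A_{k,m}(w)=q_n-w\sum_{j=l+1}^{n-1-l}q_{n-j}q_j ,
\]
the inequality follows from Lemma~\ref{lem:Efraimidis} with exponent $k=n-l-1$. The equality statement also transfers verbatim: the support of the Herglotz measure lies in a rotation of the $(m+k)$-th $=n$-th roots of unity, and $q_1=\cdots=q_k=q_{n-l-1}=0$.

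To evaluate the determinant, I will use the Schur complement formula of Lemma~\ref{lem:Schur}, splitting off the $1\times1$ block $A=(q_n)$. This gives the row $B=(q_{n-1},\ldots,q_{l+1})$, whose $i$-th entry is $q_{n-i}$ for $1\le i\le k$, and the column $C=w\,(q_1,\ldots,q_k)^T$. The remaining block $D$ is unit lower triangular Toeplitz, $D=I+wL$, where $L$ is strictly lower triangular with $(i,j)$ entry $q_{i-j}$. Then $\det D=1$, $D$ is invertible, and
\[
 A_{k,m}(w)=q_n-BD^{-1}C, \qquad D^{-1}=\sum_{s\ge0}(-w)^sL^s ,
\]
where the sum is finite because $L$ is nilpotent.

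The main step, and the only real obstacle, is to show that every term with $s\ge1$ vanishes. Because $q_1=\cdots=q_l=0$, the entry $L_{ij}$ is nonzero only when $i-j\ge l+1$. Similarly, $C_j$ is nonzero only if $j\ge l+1$, and $B_i=q_{n-i}$ is nonzero only if $i\ge l+1$ and $n-i\ge l+1$. Consider a generic summand of $BL^sC$: it is a product $q_{n-i_0}\,q_{i_0-i_1}\cdots q_{i_{s-1}-i_s}\,q_{i_s}$ of $s+2$ coefficients. The indices of these coefficients sum to $n$, and each must be at least $l+1$ for the product to be nonzero. When $s\ge1$ that forces $n\ge 3(l+1)$, which is false because $l=\lfloor n/3\rfloor$ gives $3l+3>n$. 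Hence only the $s=0$ term survives, namely
\[
 BC=w\sum_{j=1}^{k}q_{n-j}q_j .
\]
In this sum the terms with $j\le l$ vanish, and the upper limit is $k=n-l-1$. This is exactly the sum in \eqref{eq:A1}, which completes the identification and hence the proof.
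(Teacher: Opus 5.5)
Your proof is correct and takes essentially the paper's route: you identify the left-hand side with Efraimidis' determinant $A_{n-l-1,l+1}(w)$ via the Schur complement of the $1\times1$ block $(q_n)$, then invoke Lemma~\ref{lem:Efraimidis}. The paper instead shows $T^{-1}C=C$ directly from the shift structure and treats $n=6$ as a separate case, whereas you expand $D^{-1}$ as a Neumann series and kill every term $BL^sC$ with $s\ge1$ by noting that $n\ge 3(l+1)$ is impossible, which covers all $n\ge6$ at once; your side remark that $B_i\neq0$ forces $i\ge l+1$ is false (e.g.\ $B_1=q_{n-1}$), but the argument never uses it.
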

	\begin{proof}
		Since $n \geq 6$, observe that $n - 1 - l \geq l + 1 \geq 3$. The idea of the proof is to calculate the determinant $A_{n-1-l,l+1}(w)$ and then apply Lemma \ref{lem:Efraimidis}. If $n = 6$, then \eqref{eq:A1} becomes
			\[
				|q_6 - w q_3^2| \leq 2 \max \{1, |1 - 2w|^3 \}.
			\]
			Note that
			\[
				A_{3,3} (w) = \begin{vmatrix}
					q_6 & q_5 & q_4 & q_3 \\
					0 & 1 & 0 & 0 \\
					0 & 0 & 1 & 0 \\
					wq_3 & 0 & 0 & 1
				\end{vmatrix} = q_6 - w q_3^2,
			\]
			since $q_1=q_2=0$, so there is nothing left to do.
		
		Assume now that $n \geq 7$. We see that
			\[
				A_{n - 1 - l, l + 1} (w) = \det \begin{pmatrix}
					q_n & R \\ C & T
				\end{pmatrix},
			\]
			where $R = \begin{pmatrix}
				q_{n - 1} & q_{n - 2} & \ldots & q_{l+1}
			\end{pmatrix}$, $C^T = w \begin{pmatrix}
				q_1 & q_2 & \ldots & q_{n - l - 1}
\end{pmatrix}$ and the $(n - 1 - l) \times (n - 1 - l)$ matrix
			\[
				T = \begin{pmatrix}
					1 & 0 & 0 & \ldots & 0 \\
					wq_1 & 1 & 0 & \ldots & 0 \\
					wq_2 & wq_1 & 1 & \ldots & 0 \\
					\vdots & \vdots & \vdots & \ddots & \vdots \\
					w q_{n - 2 - l} & w q_{n - 3 - l} & w q_{n - 4 - l} & \ldots & 1
				\end{pmatrix}.
			\]
			Observe that $T$ is a triangular Toeplitz matrix and thus we may write
			\[
				T = I + w \sum_{j = 1}^{n - 2 - l} q_j J^j,
			\]
			where $I$ is the identity matrix of size $n - 1 - l$ and $J$ is the matrix of the same size whose entries are equal to $1$ on the first subdiagonal and to $0$ elsewhere. Since $n \geq 7$, then $n - 2 - l \geq l + 1$ and the hypothesis $q_1 = q_2 = \ldots = q_l = 0$ implies that
			\[
				T = I + w \sum_{j = l+1}^{n - 2 - l} q_j J^j.
			\]
			
			By definition of $l$, there exists a unique $s \in \{0, 1, 2 \}$ such that $n = 3l + s$. Thus, $n - 1 - l = 2l - 1 + s \leq 2(l + 1)$ and, since $J^{n - 1 - l} = 0$, we have that
			\[
				\left( w \sum_{j = l+1}^{n-2-l} q_j J^j\right)^2 = 0.
			\]
			Summing up, we have that $T$ must be invertible and
			\begin{equation} \label{eq:InverseToeplitz}
				T^{-1} = I - w \sum_{j = l+1}^{n-2-l} q_j J^j.
			\end{equation}
			
			Recall that, from the hypothesis, the first $l$ components of $C$ are zero. Since $J^jC$ is obtained from $C$ by shifting its entries $j$ positions downwards and $2l+1\ge n-1-l$, it follows that $J^j C = 0$ for all $l + 1 \leq j \leq n - 2 - l$ and thus $T^{-1}C = C$ by \eqref{eq:InverseToeplitz}. Then Lemma~\ref{lem:Schur} yields that
			\[
				A_{n-l-1,l+1} (w) = \det T \cdot \det(q_n - R T^{-1} C) = q_n - RC = q_n - w \sum_{j = l + 1}^{n - l - 1} q_{n - j} q_j.
			\]	
		(recall that $\det T=1$). The proof now follows by applying Lemma~\ref{lem:Efraimidis} with $k=n-1-l$ and $m=l+1$.
	\end{proof}
\par
		Next, we show that the bound on the right-hand side of \eqref{eq:A1} can be improved when $w\ge 1$ is real.
		\begin{lem} \label{lem:ImprovementDet1}
			Under the assumptions of Lemma~\ref{lem:det1}, if $w \geq 1$ is real, then
			\[
				\left| q_n - w \sum_{j = l+1}^{n - 1 - l} q_{n-j} q_j \right| \leq 2(2w-1).
			\]
		\end{lem}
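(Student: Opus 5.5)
The plan is to get the bound for real $w\ge 1$ from two endpoint cases that Lemma~\ref{lem:det1} and Carath\'eodory's lemma already supply. The point is that the quantity
\[
 E(w) = q_n - w\sum_{j=l+1}^{n-1-l} q_{n-j}q_j
\]
is an affine function of $w$. Write $S=\sum_{j=l+1}^{n-1-l} q_{n-j}q_j$, so that $E(w)=q_n-wS$. For every $w\in\C$ this gives the identity
\[
 E(w) = w\,E(1) + (1-w)\,E(0) = w\,(q_n - S) + (1-w)\,q_n .
\]
The identity can be checked directly, since the coefficient of $q_n$ on the right is $w+(1-w)=1$ and the coefficient of $S$ is $-w$.

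The two endpoint values are controlled as follows. For $w=1$ we have $|1-2w|=1$, so Lemma~\ref{lem:det1} gives $|E(1)|=|q_n-S|\le 2\max\{1,1\}=2$. For $w=0$, Carath\'eodory's lemma gives $|E(0)|=|q_n|\le 2$. The same bound also follows from Lemma~\ref{lem:det1} with $w=0$, where the right-hand side equals $2\max\{1,1\}=2$.

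Now take $w\ge 1$ real, so that $|w|=w$ and $|1-w|=w-1$. The triangle inequality applied to the identity yields
\[
 |E(w)| \le w\cdot 2 + (w-1)\cdot 2 = 2(2w-1),
\]
which is exactly the claimed estimate.

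I do not expect a genuine obstacle here. The only point needing care is that Lemma~\ref{lem:det1} is used at the boundary value $w=1$, where $|1-2w|=1$. This is fine, because the inequality in Lemma~\ref{lem:det1} holds for every complex $w$; only its equality characterization requires $|1-2w|<1$. The interpolation above is also where the reality and the size of $w$ enter: writing $|1-w|=w-1$ requires $w$ to be real with $w\ge1$, and the resulting bound $2(2w-1)$ improves on the $2|1-2w|^{n-l-1}=2(2w-1)^{n-l-1}$ given by Lemma~\ref{lem:det1} once $w>1$.
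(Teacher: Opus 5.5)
Your proposal is correct and is essentially the paper's own proof. The paper uses the same decomposition $q_n - wS = -(w-1)q_n + w(q_n - S)$, bounds $|q_n - S|\le 2$ by Lemma~\ref{lem:det1} at $w=1$ and $|q_n|\le 2$ by Carath\'eodory's lemma, and concludes with the triangle inequality.
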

		\begin{proof}
			For $w=1$, \eqref{eq:A1} is exactly what we want to show. If $w > 1$, we can apply the case $w=1$, Carath\'eodory's lemma, and the triangle inequality to see that
			\[
			\left| q_n - w \sum_{j = l+1}^{n - 1 - l} q_{n-j} q_j \right| = \left| -(w-1) q_n + w\left(q_n - \sum_{j = l+1}^{n - 1 - l} q_{n-j} q_j \right)\right| \leq 2(w-1) + 2w = 2(2w-1).
			\]
		\end{proof}

\par
\subsection{Equivalent formulations of the generalized conjecture}
 \label{subsect:equiv-form}
Since the Krzy\.z conjecture is believed to be true by many experts, we expect the following more general conjecture for problem \eqref{eq:extr-alt-p} to be true.
\begin{conj}
Let $n\ge 1$ and let $\F_p$ be an admissible functional. Then $G_n(p)=2/e$ and the only extremal function for \eqref{eq:extr-alt-p} is $f_n(z)= e^{\frac{z^n-1}{z^n+1}}$.
\end{conj}
\noindent
By Corollary~\ref{cor:small-n}, the conjecture holds for $n\le 5$, and by Theorem~\ref{thm:rel-probl}~\ref{thm:rel-probl:a}, it holds for every $n$ for which the Krzy\.z conjecture is true. This raises the natural question as to whether some parts of the main theorem from \cite{MSUV} (see Theorem~\ref{thm:A}) can also be formulated in this more general context.
\par
For a non-constant function $f$ in $\cb_\ast$ we may write $f=e^g$ where $\re g (z) <0$ for all $z$ in $\D$. Writing $g(z) = \sum_{j=0}^{\infty} b_j z^j$ and assuming that $b_0$ is real and negative, it is immediate that the normalized function $q$ given by $q(z)=g(z)/b_0$ belongs to $\cp$; in particular, $|b_n| \leq 2|b_0|$ for all $n\ge 1$ by Carath\'eodory's lemma. The relation between the coefficients of $f$ and $g$ is expressed by the following well-known result; \textit{cf.\/} \cite{MSUV}, for example.
	\begin{lem}\label{lem:exp-depend}
Let $f$ and $g$ be analytic functions in $\D$ such that $f=e^g$, given by their respective power series $f(z)=\sum_{j=0}^{\infty} a_j z^j$ and $g(z)=\sum_{j=0}^{\infty} b_j z^j$. Then
	\begin{enumerate}[(a)]
\item \label{lem:exp-depend:a} $a_0=e^{b_0}$ and
		\[		 a_n=\sum_{k=0}^{n-1}\frac{n-k}{n}a_kb_{n-k}=\sum_{j=1}^{n}\frac{j}{n}a_{n-j}b_j, \quad n \geq 1.
		\]
\item \label{lem:exp-depend:b} For a fixed integer $j\ge 1$, $b_1=b_2=\dots=b_j=0$ holds if and only if $a_1=a_2=\dots=a_j=0$.
    \end{enumerate}
	\end{lem}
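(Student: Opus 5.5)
The plan is to derive everything from the differential identity $f'=g'f$, which holds because $f=e^{g}$. Evaluating $f=e^g$ at the origin gives $a_0=f(0)=e^{g(0)}=e^{b_0}$ at once. For the recursion, I will write
\[
 f'(z)=\sum_{n=1}^\infty n a_n z^{n-1}, \qquad g'(z)=\sum_{j=1}^\infty j b_j z^{j-1}.
\]
I then multiply the series for $g'$ by the series for $f$ (a Cauchy product, which is legitimate since both series converge absolutely in $\D$). Comparing the coefficients of $z^{n-1}$ on both sides of $f'=g'f$ gives, for every $n\ge 1$,
\[
 n a_n=\sum_{j=1}^{n} j\, b_j\, a_{n-j}.
\]
Dividing by $n$ gives the second expression in part (a). The substitution $k=n-j$ turns it into the first expression $\sum_{k=0}^{n-1}\frac{n-k}{n}a_k b_{n-k}$. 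This completes (a).

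For part (b) I will use induction on $j$, relying on the recursion from (a) together with the fact that $a_0=e^{b_0}\neq 0$. Isolating the term with index $j=n$ in the recursion gives
\[
 a_n = a_0 b_n + \sum_{i=1}^{n-1}\frac{i}{n}\, a_{n-i}\, b_i .
\]
The induction step then runs in both directions.
\begin{itemize}
\item If $b_1=\dots=b_j=0$, then for each $n\le j$ every term of the remaining sum contains a factor $b_i$ with $i<n\le j$. So the sum vanishes and $a_n=a_0b_n=0$.
\item Conversely, assume $a_1=\dots=a_j=0$, and suppose inductively that $b_1=\dots=b_{n-1}=0$ for some $n\le j$. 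Then the sum vanishes again, so $0=a_n=a_0b_n$. Since $a_0\neq0$, this forces $b_n=0$, which completes the step.
\end{itemize}
The base case $n=1$ is the identity $a_1=a_0b_1$.

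None of these steps is a genuine obstacle. The only points needing care are keeping the indices consistent between the two forms of the sum in (a), and remembering that the converse direction of (b) uses $a_0\neq 0$.
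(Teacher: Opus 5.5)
Your proof is correct, and it is the standard argument. The paper gives no proof of this lemma; it quotes it as a well-known fact from the literature. Your derivation (comparing coefficients of $z^{n-1}$ in $f'=g'f$, then reading off (b) from the resulting triangular recursion using $a_0=e^{b_0}\neq 0$) is exactly what lies behind that citation.

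A minor simplification: the induction in (b) is unnecessary. For $n\le j$ the sum $\sum_{i=1}^{n-1}\frac{i}{n}a_{n-i}b_i$ vanishes directly under either hypothesis, since it involves only $b_1,\dots,b_{n-1}$ and only $a_1,\dots,a_{n-1}$. So $a_n=a_0b_n$ for all $n\le j$, and both implications follow at once.
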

\par
The following formula was described structurally in \cite[Proposition~1]{MSUV} but had actually been known earlier to Lewandowski and Szynal \cite[Lemma~1]{LS}, whose detailed survey may not have received all the attention it deserved. The formula was stated there in a more precise form as follows.

	\begin{lem}\label{lem:coeff-formula}
		Let $f$ and $g$ be analytic functions in $\D$ such that $f=e^g$, given by their respective power series $f(z) = \sum_{j=0}^{\infty} a_j z^j$ and $g(z)=\sum_{j=0}^{\infty} b_j z^j$. Assuming that $b_0$ is real and negative and writing $t = -b_0 > 0$ and $q_k = b_k/b_0$ for $k\ge 1$, we have
\[
    a_n = e^{-t} \sum_{k_1 + 2k_2 + \ldots + nk_n = n, \, 0 \leq k_j} \dfrac{(-t)^{k_1 + k_2 + \ldots + k_n} q_1^{k_1} q_2^{k_2} \ldots q_n^{k_n}}{k_1! k_2! \ldots k_n!}.
    \label{eq:coeff-formula}
\]
	\end{lem}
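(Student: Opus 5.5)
The plan is to expand the exponential directly, working with formal power series, which is legitimate since all the series involved converge absolutely and locally uniformly in $\D$. Write
\[
 f(z)=e^{g(z)}=e^{b_0}\exp\Big(\sum_{k=1}^\infty b_kz^k\Big)=e^{-t}\prod_{k=1}^\infty \exp\big(b_kz^k\big).
\]
By the definition $q_k=b_k/b_0$ and $b_0=-t$, we have $b_k=-t\,q_k$ for every $k\ge1$.

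\emph{Truncation.} Since we only want the coefficient $a_n$, we first discard all terms with $k>n$. The factor $\exp\big(\sum_{k>n}b_kz^k\big)$ equals $1+O(z^{n+1})$, so it does not affect the coefficients of $z^0,\dots,z^n$. Hence $a_n$ is the coefficient of $z^n$ in the finite product
\[
 e^{-t}\prod_{k=1}^n\exp(b_kz^k).
\]
This avoids any convergence issue with infinite products. The only analytic input is that $f$ is holomorphic and its Taylor series is obtained by multiplying the convergent series $\exp(b_kz^k)=\sum_{m\ge0}b_k^m z^{km}/m!$ termwise (Cauchy products of absolutely convergent series in $|z|<1$).

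\emph{Extracting the coefficient.} In the product of the $n$ series $\sum_{k_j\ge0} b_j^{k_j}z^{jk_j}/k_j!$, $j=1,\dots,n$, a choice of exponents $(k_1,\dots,k_n)$ contributes to $z^n$ exactly when $k_1+2k_2+\dots+nk_n=n$. Such a choice contributes
\[
 \prod_j \frac{b_j^{k_j}}{k_j!}.
\]
Substituting $b_j=-t q_j$ turns each contribution into
\[
 \frac{(-t)^{k_1+\dots+k_n}\,q_1^{k_1}\cdots q_n^{k_n}}{k_1!\cdots k_n!}.
\]
Multiplying by $e^{-t}=a_0$ and summing over all such tuples gives the claimed formula. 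The index set is finite, since $k_j\le n/j$, so no rearrangement issues arise.

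There is no real obstacle; the step needing the most care is the justification that coefficients of $z^n$ are unaffected by the tail $k>n$ and by the order of multiplication. One can cross-check the result against the recursion in Lemma~\ref{lem:exp-depend}\,(a). For $n=1$ the formula gives $a_1=e^{-t}(-t q_1)=a_0b_1$, which agrees. Alternatively, the same identity follows from Fa\`a di Bruno's formula applied to $\exp\circ g$ at $0$, which could be cited instead.
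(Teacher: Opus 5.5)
Your proof is correct and complete. The paper does not prove this lemma at all. It quotes the formula from Lewandowski and Szynal (their Lemma~1), and remarks that a structural version was also used later in the Krzy\.z literature, namely that each $a_k$ is $a_0$ times a polynomial in $b_1,\dots,b_k$.

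Your derivation is the natural self-contained argument:
\begin{enumerate}[(1)]
\item truncate $g$ at degree $n$, which multiplies $e^{g}$ only by a factor $1+O(z^{n+1})$;
\item read off the $z^n$ coefficient of the finite product $\prod_{k=1}^n\exp(b_kz^k)$;
\item substitute $b_k=-t\,q_k$.
\end{enumerate}
Compared with the citation, this costs a few lines but makes the paper self-contained at this point. It also makes explicit something the citation hides: the identity is purely formal. It uses neither $\re g<0$ nor the sign of $b_0$ beyond the notation $t=-b_0$, so it holds for any $g$ analytic near the origin.

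Your formula is the exponential formula, i.e.\ $a_n=a_0\,B_n(1!\,b_1,2!\,b_2,\dots,n!\,b_n)/n!$ with complete Bell polynomials $B_n$. It is therefore equivalent to the Fa\`a di Bruno route you mention. One could also reach it by induction from the recursion in Lemma~\ref{lem:exp-depend}\,(a), but your direct expansion avoids that combinatorial bookkeeping. Your sanity check $a_1=a_0b_1$, and likewise $a_2=a_0\bigl(b_2+\tfrac12 b_1^2\bigr)$, agrees with that recursion.
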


We now have all the ingredients needed to prove Theorem~\ref{thm:Gn}, formulated in Section~\ref{sect:intro}.

	\begin{proof}[Proof of Theorem~\ref{thm:Gn}]
One implication is trivial, hence it suffices to prove that \ref{thm:Gna} implies \ref{thm:Gnb}.

Assume that \ref{thm:Gna} holds. Since $f\in\cb_n$, it is a non-constant function in $\cb_\ast$ with $f(0)=a_0>0$, so it can be written as $f=e^g$, where $g(z)=\sum_{j=0}^\infty b_j z^j$ is analytic in $\D$, $\re g<0$, and $b_0=\log a_0$ is real and negative. Writing again $t = -b_0 > 0$ and $q_k = b_k/b_0$ for $k\ge 1$ as in Lemma~\ref{lem:coeff-formula}, we have $g = -t g_0$, where $g_0\in \cp$ and $g_0(z) = 1 + \sum_{j \geq 1} q_j z^j$.
		
		By our assumption \ref{thm:Gna} and Lemma~\ref{lem:exp-depend}~\ref{lem:exp-depend:b}, we have $b_1=\ldots=b_l=0$, that is, $q_1=\ldots=q_l=0$, so $g_0 - 1$ has a zero of order at least $l + 1$ at the origin. Hence, Lemma~\ref{lem:coeff-formula} yields
			\begin{equation}
				a_n = e^{-t} \sum_{(l+1) k_{l+1} + \ldots + nk_n = n, \, 0 \leq k_j} \dfrac{(-t)^{k_{l+1} + \ldots + k_n} q_{l+1}^{k_{l+1}} \ldots q_n^{k_n}}{k_{l+1}! \ldots k_n!}.
            \label{eq:LSRestricted}
			\end{equation}
			We will show that $(l+1) k_{l+1} + \ldots + n k_n = n$ for an $(n-l)$-tuple of indices $(k_{l+1}, \ldots, k_n)$ if and only if either $(k_{l+1}, \ldots, k_n) = (0, \ldots, 0, 1)$ or there exists $j \in \{l+1, \ldots, n - 1 - l \}$ such that $k_j = k_{n-j} = 1$ if $j\neq n-j$ (respectively, $k_{n/2}=2$ if $j=n/2$) and all the remaining indices are zero. These are exactly the solutions with $\sum_{j = l+1}^n k_j \leq 2$; if $\sum_{j = l+1}^n k_j \geq 3$, we would have that
			\[
				n = \sum_{j = l+1}^n j k_j \geq (l+1)  \sum_{j = l+1}^n k_j \geq 3(l+1),
			\]
			which is impossible by the choice of $l$. Then \eqref{eq:LSRestricted} becomes
			\[
				a_n = -te^{-t} \left( q_n - \dfrac{t}{2} \sum_{j=l+1}^{n-1-l} q_j q_{n-j}  \right)
			\]
			(note that every product $q_j q_{n-j}$ with $j\neq n-j$ appears twice in the sum, while for even $n$ the term $q_{n/2}^2$ appears once and comes with the factor $1/2!$ in \eqref{eq:LSRestricted}). We may apply Lemma~\ref{lem:det1} for $0<t\le 2$, with $w=t/2$ (note that $|1-2w|=|1-t|\le 1$) and Lemma~\ref{lem:ImprovementDet1} for $t\ge 2$), also with $w=t/2$, to conclude that
			\[
				a_n = |a_n| \leq 2t e^{-t} \max \{1, t - 1\}.
			\]
			A straightforward computation shows that
            \[
\max \{te^{-t} : 0 < t \leq 2 \} = e^{-1} > (2 + \sqrt{5}) e^{-(3+\sqrt{5})/2} = \max \{t (t-1) e^{-t} : t \ge 2 \}.
            \]
The maximum of the expression on the left is only attained when $t = 1$, while the maximum on the right is attained only at $t=(3+\sqrt{5})/2$. Thus, $a_n \leq \dfrac{2}{e}$, and $G_n(p) = \frac{2}{e}$ readily follows from the chain of inequalities (the first one holds since $f_n\in\cb_n$ and $\F_p(f_n)=2/e$)
            \[
 \frac{2}{e} \leq G_n(p) = \Phi_p (f) \leq a_n.
            \] 		
			Moreover, $a_n=2/e$, so equality must hold in all the estimates above; in particular, $t=1$ and equality holds in \eqref{eq:A1} with $w=\frac12$.
\par
To finish the proof, it remains to show that $f=f_n$. Since $f\in\cb_n$ and $t=1$, we can write $g_0(z) = \sum_{j = 1}^N r_j \frac{1 + \alpha_j z}{1 - \alpha_j z}$ for certain $N\le n$, $r_j > 0$ with $\sum_{j=1}^N r_j=1$, and distinct points $\alpha_j \in \T$. Since equality holds in \eqref{eq:A1} for $g_0$ with $w = \frac{1}{2}$, Lemma~\ref{lem:det1} shows that $q_1 = q_2 = \ldots = q_{n - 1 - l} = 0$ (hence, $b_1 = b_2 = \ldots = b_{n - 1 - l} = 0$) and that there is $\lambda\in\T$ with $\alpha_j^n = \lambda$ for all $1 \leq j \leq N$. The power series expansion of $\frac{1 + \alpha_j z}{1 - \alpha_j z}$ yields
		\[
			b_{n - k} = - 2 \sum_{j = 1}^N r_j \alpha_j^{n-k} = \lambda \overline{b_k}, \quad 1 \leq k \leq n-1,
		\]
hence $b_{l+1} = \ldots = b_{n-1} = 0$ as well. Now Lemma~\ref{lem:exp-depend}~\ref{lem:exp-depend:b} readily implies that $a_1 = \ldots = a_{n-1} = 0$. Once we know this, the fact that $f(z) = e^{\frac{z^n-1}{z^n+1}}$ is proved by following \textit{verbatim\/} the proof of the implication (b) $\Longrightarrow$ (c) of \cite[Theorem~1]{MSUV}, based on a simple use of a system of linear equations and the properties of the Vandermonde determinant.	
	\end{proof}
\par
As already mentioned, the case $p\equiv 0$ of Theorem~\ref{thm:Gn} is the following result of MacDonald \cite{MD}; our proof of it is independent of the one in \cite{MD}.
	\begin{cor} \label{cor:MacDonald}
	Let $n\ge 6$ and let $f(z)=\sum_{j=0}^\infty a_j z^j$ be a normalized extremal function for the Krzy\.z problem \eqref{eq:extr-ref}. If $a_1=\ldots=a_{\lfloor n/3\rfloor}=0$, then $M_n=2/e$ and $f(z)=e^{\frac{z^n-1}{z^n+1}}$.
	\end{cor}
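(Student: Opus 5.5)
This is the case $p\equiv 0$ of Theorem~\ref{thm:Gn}, so the plan is to check that the proof of Theorem~\ref{thm:Gn} goes through for this weight, which the theorem statement explicitly allows. First I would place $f$ in the class $\cb_n$. By Theorem~\ref{thm:A}~\ref{thm:Ai}, $f$ is a singular inner function with at most $n$ atoms, and $a_0>0$. By Theorem~\ref{thm:A}~\ref{thm:Aii}, $a_n\ge 2a_0$. Hence $f\in\cb_n$. Since $\F_0(f)=a_n$, the problem \eqref{eq:extr-alt-p} with $p\equiv 0$ is just the Krzy\.z problem restricted to $\cb_n$. This restriction does not change the value, because $M_n=\max\{a_n:f\in\cb_n\}$ as observed at the start of Section~\ref{sect:spec-probl}. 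So $f$ is an extremal function for the $p\equiv0$ problem.

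Next I would rerun the coefficient argument. Write $f=e^{g}$ with $g=-tg_0$ and $g_0\in\cp$. The hypothesis $a_1=\dots=a_l=0$, via Lemma~\ref{lem:exp-depend}~\ref{lem:exp-depend:b}, gives $q_1=\dots=q_l=0$. Lemma~\ref{lem:coeff-formula} then collapses to
\[
a_n=-te^{-t}\Bigl(q_n-\tfrac t2\sum_{j=l+1}^{n-1-l}q_jq_{n-j}\Bigr).
\]
Apply Lemma~\ref{lem:det1} with $w=t/2$ when $t\le 2$, and Lemma~\ref{lem:ImprovementDet1} when $t\ge2$. This yields $a_n\le 2te^{-t}\max\{1,t-1\}\le 2/e$, with equality only at $t=1$. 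Since $f_n\in\cb_\ast$ gives $M_n\ge 2/e$, we get $M_n=a_n=2/e$ and $t=1$. The weight entered the original proof only through the chain $2/e\le G_n(p)=\F_p(f)\le a_n$, and for $p\equiv0$ this chain is trivially $2/e\le M_n=a_n$.

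Finally, equality in \eqref{eq:A1} with $w=1/2$ forces, via Lemma~\ref{lem:det1}, that $q_1=\dots=q_{n-1-l}=0$ and that all atoms satisfy $\alpha_j^n=\lambda$. The symmetry $b_{n-k}=\lambda\overline{b_k}$ then kills the remaining coefficients, so $a_1=\dots=a_{n-1}=0$. The argument of \cite[Theorem~1]{MSUV} then gives $f=f_n$; alternatively, once $a_n=2/e=2a_0$, Theorem~\ref{thm:A}~\ref{thm:Aiii} gives $f=f_n$ directly.

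The only delicate point is confirming that $p\equiv0$ is genuinely admissible for Theorem~\ref{thm:Gn}. The equivalence \eqref{eq:key-ineq} fails for $p\equiv0$, and I would check that it is never used in the proof. Indeed, membership of $f$ in $\cb_n$ comes from Theorem~\ref{thm:A}, not from \eqref{eq:key-ineq}, so nothing breaks.
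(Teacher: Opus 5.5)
Your proposal is correct and follows the paper's route. The paper's proof is just your first paragraph: Theorem~\ref{thm:A} places $f$ in $\cb_n$ as an extremal function for the $p\equiv 0$ problem, whose value is $M_n$, and the paper then cites Theorem~\ref{thm:Gn}, whose statement already allows $p\equiv 0$, so your re-run of the coefficient argument is a faithful but redundant check. Your alternative ending is also valid, but only for $p\equiv 0$: $t=1$ gives $a_0=1/e=a_n/2$, and since $f$ is Krzy\.z-extremal, Theorem~\ref{thm:A}~\ref{thm:Aiii} yields $f=f_n$ directly.
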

	\begin{proof}
	By Theorem~\ref{thm:A}, $f\in\cb_n$ and $f$ is an extremal function for \eqref{eq:extr-alt-p} with $p\equiv 0$, for which $G_n(0)=M_n$. Thus, the statement is the case $p\equiv 0$ of Theorem~\ref{thm:Gn}.
	\end{proof}

It should be noted that if the recent proof \cite{W} is confirmed for $n=6$, then this case in Theorem~\ref{thm:Gn} will no longer represent a novelty.

\subsection*{AI disclosure}
    This project was started in 2024, well before the most recent advances in AI; AI tools were used only in the final stages of our work.

    ChatGPT~5.6~Sol was used to simplify our original proof of Lemma~\ref{lem:det1}. More precisely, it suggested the use of the Schur complement formula (Lemma~\ref{lem:Schur}) to simplify the notation. The original statement of Theorem~\ref{thm:Gn}~\ref{thm:Gna} included the extra assumption $a_0 \ge 1/e^2$. ChatGPT~6~Astra provided the observation to be found in Lemma~\ref{lem:ImprovementDet1}, which was then used to remove this extra condition. Example~\ref{ex:more-atoms} in Section~\ref{sect:spec-probl} was also suggested by ChatGPT~5.6~Sol.

    Claude Opus~5.5 (Anthropic) and  ChatGPT~6~Astra were used to proofread the manuscript. The former suggested various useful  simplifications, improvements in exposition, and observations such as Remark~\ref{rem:total-mass}.

\subsection*{Acknowledgments}
    The three authors are partially supported by PID2025-168349NB-I00 from Mi\-nisterio de Innovaci\'on, Ciencia y Universidades, Spain.

    This work was partially supported by the project Pure Mathematics in Norway, funded by Trond Mohn Foundation and Troms\o{} Research Foundation. This work started when the first author was visiting Universidad Aut\'onoma de Madrid and the first author would like to thank the university for the hospitality as well as Norges tekniske h\o{}gskoles fond for covering parts of this travel.

    The second author is partially supported by PID2024-160326NA-I00, also from MICIU, and by CIGE/2026/195 from Conselleria d’Edu\-caci\'o, Cultura i Universitats, Generalitat Valenciana, Spain.


\end{document}